\newtheorem{theorem}{Theorem}
\newtheorem*{Kunenslemma}{Kunen's lemma}
\newtheorem{corollary}{Corollary} 
\newtheorem{proposition}{Proposition} 
\theoremstyle{definition} 
\newtheorem{remark}{Remark}
\newtheorem{definition}{Definition}
\let\le\leqslant
\let\ge\geqslant
\newcommand{\cR}{\mathscr{R}}
\newcommand{\pl}{\operatorname{\mbox{$p$-$\mathrm{lim}$}}}
\newcommand{\ql}{\operatorname{\mbox{$q$-$\mathrm{lim}$}}}
\newcommand{\leRK}{\mathrel{\le_{\mathrm{RK}}}}
\newcommand{\leRF}{\mathrel{\le_{\mathrm{RF}}}}
\newcommand{\leRB}{\mathrel{\le_{\mathrm{RB}}}}
\begin{document} 

\title[Discrete Ultrafilters and Homogeneity of Product Spaces]{Discrete Ultrafilters\\ 
and Homogeneity of Product Spaces} 
\author{Anastasiya Groznova}

\address{Department of General Topology and Geometry, Faculty of Mechanics and  Mathematics, 
M.~V.~Lomonosov Moscow State University, Leninskie Gory 1, Moscow, 199991 Russia}

\email{avsav999@mail.ru}

\author{Ol'ga Sipacheva}

\address{Department of General Topology and Geometry, Faculty of Mechanics and  Mathematics, 
M.~V.~Lomonosov Moscow State University, Leninskie Gory 1, Moscow, 199991 Russia}

\email{osipa@gmail.com}

\begin{abstract}
An ultrafilter $p$ on $\omega$ 
is said to be discrete if, given any function $f\colon \omega \to X$ to any completely regular Hausdorff space, 
there is an $A \in p$ such that $f(A)$ is discrete. Basic properties of discrete ultrafilters are studied. Three 
intermediate classes of spaces $\mathscr R_1 \subset \mathscr R_2 \subset \mathscr R_3$ between the class 
of $F$-spaces and the class of van~Douwen's $\beta\omega$-spaces are introduced. It is proved that no product of 
infinite compact $\mathscr R_2$-spaces is homogeneous; moreover, under the assumption $\mathfrak d =\mathfrak c$, 
no product of $\beta\omega$-spaces is homogeneous.
\end{abstract}

\keywords{Discrete ultrafilter, Rudin--Frolik order, Rudin--Keisler order, Rudin--Blass order, 
$\beta\omega$-space, homogeneous product space}

\subjclass[2020]{54H11, 54G05, 03E35}

\maketitle

In \cite{Frolik} Frol\'\i k proved the nonhomogeneity of the Stone--\v Cech remainder 
$\omega^*=\beta\omega\setminus \omega$ of $\omega$ by noticing that if two discrete sequences in $\omega^*$ 
converge to the same point $x\in \omega^*$ along ultrafilters $p$ and $q$, then $p$ and $q$ are compatible 
in the Rudin--Frol\'\i k order. The idea is quite natural: if $p, q\in 
\omega^*$ are incompatible and $D=\{d_n:n\in \omega\}$ is a countable discrete subset of $\omega^*$, then 
there cannot exist a homeomorphism $h\colon \omega^*\to\omega^*$ taking $\ql_n d_n$ to $\pl_n d_n$, because 
if it existed, then $(d_n)_{n\in \omega}$ and $(h(d_n))_{n\in\omega}$ would be discrete sequences converging to 
the same point $\pl_n d_n$ along $p$ and $q$, respectively. 

Frol\'\i k's idea of proving nonhomogeneity by considering orderings of ultrafilters was developed by Kunen. 
One of the key ingredients in his proof of the inhomogeneity of any product of infinite 
compact $F$-spaces is the following lemma on the Rudin--Keisler comparability of ultrafilters.  

\begin{Kunenslemma}[{\cite[Lemma 4]{K-OPIT}}]
Let $p, q \in \omega^*$ be Rudin--Keisler incomparable weak $P$-points, and let $X$ be
any compact $F$-space. Suppose that $x\in X$,  $(d_m)_{m\in \omega}$ is a discrete sequence of distinct points in 
$X$, $(e_n)_{n\in \omega}$ is any sequence of points in $X$, and $x = \pl_m d_m = \ql_n e_n$. 
Then $\{n : e_n = x\} \in q$. 
\end{Kunenslemma}

The present paper arose from an attempt to extend Kunen's lemma and, thereby, his result on the 
inhomogeneity of product spaces to other classes of spaces. This can be done  
by looking for larger classes for which Kunen's argument still works 
or by strengthening the assumptions on the ultrafilters $p$ and~$q$. In 
\cite{GS} we introduced new classes $\cR_1$, $\cR_2$, and $\cR_3$ of topological spaces, which lie strictly 
between the classes of $F$- and $\beta \omega$-spaces, and proved that Kunen's lemma remains valid for 
$\cR_2$-spaces. In this paper we mainly focus on special ultrafilters, namely, discrete ones, which are especially 
interesting in the context of homogeneity, because the convergence of any sequence along a discrete ultrafilter 
reduces to the convergence of a discrete subsequence. 

\section{Preliminaries}

Throughout the paper by a space we mean a completely regular Hausdorff topological space. 

Given a space $X$ and $A\subset X$, by $\overline A$ we denote the closure of $A$ in $X$ and by $|A|$, the 
cardinality of $A$. Recall that sets $A,B\subset X$ are \emph{separated} if $\overline{A}\cap B = A\cap 
\overline{B} =\varnothing$. A subspace $A\subset X$ is \emph{$C^*$-embedded} in $X$ if any continuous function 
$f\colon A\to [0,1]$ has a continuous extension $\hat f\colon X\to [0,1]$. We say that a sequence 
$(x_n)_{n\in\omega}$ of points of a space $X$ is \emph{discrete} if its range $\{x_n:n\in \omega\}$ is a discrete 
(not necessarily closed) subspace of~$X$.

Every space $X$ has \emph{Stone--\v Cech compactification} $\beta X$; this is a compact space in which $X$ is 
densely embedded so that any continuous map $f\colon X\to K$ to a Hausdorff compact space $K$ has a continuous 
extension $\beta f\colon \beta X\to K$.

We use the standard notation $\mathbb R$ for the real line with the usual topology, $\mathbb Q$ for the space of 
rationals, $\omega$ for the set of nonnegative integers (endowed with the discrete topology when appropriate), 
$\beta \omega$ for the Stone--\v Cech compactification of $\omega$, and $\omega^*$ for the Stone--\v Cech 
remainder $\beta\omega\setminus \omega$. It is well known that $\beta\omega$ is nothing but the space of 
ultrafilters on $\omega$ endowed with the topology generated by the base consisting of sets of the form $\overline 
A=\{p\in \beta\omega: A\in p\}$, where $A\subset \omega$; each $n\in \omega$ is identified with the principal 
ultrafilter $p(n)=\{A\subset \omega: n\in A\}$, so that $\omega^*$ is precisely the subspace of nonprincipal 
ultrafilters and $\omega$ is embedded in $\beta \omega$ as a dense open discrete subspace. Moreover, for each 
$A\subset \omega$, the set $\overline A$ defined above is indeed the closure of $A$ in $\beta\omega$. Note also 
that an ultrafilter is nonprincipal if and only if all of its elements are infinite.

The \emph{limit} of a sequence $(x_n)_{n\in \omega}$ in a space $X$ \emph{along an ultrafilter} $p$ on $\omega$, 
or the \emph{$p$-limit} of $(x_n)_{n\in \omega}$, is a point $x\in X$, denoted by $\pl_n x_n$, such that, for 
any neighborhood $U$ of $x$, the set $\{n\in \omega: x_n\in U\}$ belongs to $p$. We say that $x$ is the 
\emph{nontrivial} $p$-limit of $(x_n)_{n\in \omega}$ if $\{n\in \omega: x_n=x\}\notin p$. A sequence may have no 
$p$-limit, but if its $p$-limit exists, then it is unique (recall that we assume all spaces under consideration to 
be Hausdorff). Moreover, if $X$ is compact, then any sequence $(x_n)_{n\in \omega}$ in $X$ has precisely one 
$p$-limit for any $p\in \beta\omega$. To see this, it suffices to consider the continuous extension $\beta f\colon 
\beta \omega\to X$ of the map $f\colon \omega\to X$ defined by $f(n)=x_n$ for $n\in \omega$ and apply the 
following remark.

\begin{remark}
\label{Remark1}
(i)\enspace 
Any ultrafilter $p\in \beta\omega$ is the $p$-limit of the sequence $(n)_{n\in\omega}$ in $\beta\omega$: $p=\pl_n 
n$. 

(ii)\enspace 
Let $f\colon X\to Y$ be a continuous map of spaces $X$ and $Y$, and let $x, x_n\in X$ for $n\in 
\omega$. If $p\in \beta\omega$ and $x= \pl_n x_n$, then $f(x) = \pl_n f(x_n)$. 

(iii)\enspace
Suppose that $p\in \omega^*$, $\varphi\colon \omega\to\omega$ is any function, $(x_n)_{n\in \omega}$ and 
$(y_k)_{k\in \omega}$ are two sequences in a space $X$, and there is a $P_0\in p$ such that $y_{\varphi(n)}=x_n$ 
for $n\in P_0$. If $x=\pl_n x_n$, then $x = \beta\varphi$-$\lim_k y_k$. Indeed, for any neighborhood $U$ of $x$, 
we have $\{n\in \omega: x_n\in U\}\in p$. Therefore, $\{n\in P_0: y_{\varphi(n)}\in U\}=P\in p$. Finally, $\{k\in 
\omega: y_k\in U\}\supset \varphi(P)\in \beta\varphi(p)$. 
\end{remark}

Any function $f\colon \omega\to \omega$ can be treated as a map $\omega\to \beta\omega$ and, therefore, 
has a continuous extension $\beta f\colon \beta\omega\to \beta\omega$. This extension is explicitly described as  
$$ 
\beta f(p)=\{A\subset \omega: f^{-1}(A)\in p\}, \qquad p\in \beta\omega 
$$ 
(see \cite[Lemma~3.30]{HS}). Ultrafilters 
$p$ and $q$ on $\omega$ are said to be \emph{equivalent} if there exists a bijection $\varphi\colon \omega\to 
\omega$ such that $\beta\varphi(p)=q$. 

\begin{remark}
\label{Remark_add}
Ultrafilters $p$ and $q$ on $\omega$
are equivalent if and only if there exists a function $f\colon \omega\to \omega$ and an $A\in p$ such that 
$\beta f(p)=q$ and the restriction $f\restriction A$ of $f$ to $A$ is one-to-one. Indeed, if such $f$ and $A$ 
exist, then there is a $B\subset A$, $B\in p$, for which $|\omega\setminus B|=|\omega\setminus f(B)|=\omega$, and 
any bijection $g\colon \omega\to \omega$ extending $f\restriction B$ witnesses the equivalence of $p$ and~$q$.
\end{remark}

The extension $\beta f$ of a function $f\colon \omega \to K$ to an arbitrary Hausdorff compact space $K$ has a 
simple description as well: the image under $\beta f$ of an ultrafilter $p$ is defined by $\{\beta f(p)\}= 
\bigcap_{A\in p} \overline{f(A)}$ (see \cite[Theorem~3.27]{HS}).

\section{Orders on $\beta\omega$}

There are several natural order relations on classes of 
equivalent ultrafilters; we will consider Rudin--Keisler, Rudin--Blass, and Rudin--Frol\'\i k orders. For 
detailed information about these and some other orders on $\beta\omega$, see, 
e.g., \cite{CN, vanMill, HS}, and references therein. 

The \emph{Rudin--Keisler order} $\leRK$ on $\beta\omega$ is defined by declaring that, for $p,q\in \beta 
\omega$, $p\leRK q$ if and only if there exists a function $f\colon \omega\to \omega$ such that $\beta f(q)=p$. 

The \emph{Rudin--Blass order} $\leRB$ on $\beta\omega$ is defined by declaring that, for $p,q\in \beta 
\omega$, $p\leRB q$ if and only if there exists a finite-to-one function $f\colon \omega\to \omega$ such that 
$\beta f(q)=p$. 

The \emph{Rudin--Frol\'\i k  order} $\leRF$ on $\beta\omega$ is defined by declaring that, for $p,q\in \beta 
\omega$, $p\leRF q$ if and only if there exists an injective function $\varphi\colon \omega\to \beta\omega$ such 
that $\varphi(\omega)$ is discrete and $\beta \varphi(p)=q$. 

(Note that the map $\varphi$ in the last definition and the maps $f$ in the 
two preceding ones act on the ultrafilters in opposite directions.) 

It is known that all these relations are indeed orders on the equivalence classes of ultrafilters. The relation 
$\leRK$ is largest. Indeed, the implication $p\leRB q$ $\implies$ $p\leRK q$ is obvious, and if 
$p\leRF q$, $\varphi$ is the corresponding function in the definition of $\leRF$, and $A_i$, $i\in \omega$, 
are disjoint subsets of $\omega$ determining disjoint neighborhoods 
$\overline A_i$ of the points $\varphi(i)$ in $\beta \omega$, then the function $f\colon \omega\to \omega$ 
defined by setting $f(n)=i$ if $n\in A_i$ and $f(n)=0$ if $n\notin \bigcup A_i$ witnesses that $p\leRK q$.  

\begin{remark}
\label{remark-added1}
Each nonprincipal ultrafilter on $\omega$ has at most $2^\omega$ \,$\leRK$-predecessors (because the number of 
functions $\omega\to\omega$ is $2^\omega$). 
\end{remark}

\begin{proposition}
\label{Proposition1}
For $p,q\in \beta\omega$, $p\leRF q$ if and only if $q$ is the $p$-limit of a discrete 
sequence $(x_n)_{n\in \omega}$ of distinct points in $\beta \omega$. 
\end{proposition}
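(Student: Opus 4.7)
The plan is to derive both directions essentially from Remark~\ref{Remark1}(i),(ii), which together say that $p=\pl_n n$ in $\beta\omega$ and that continuous maps commute with $p$-limits. The key observation is that, in either direction, the function $\varphi$ witnessing $p\leRF q$ and the discrete sequence $(x_n)$ converging to $q$ along $p$ are literally the same object, so the content of the proposition is just the translation between ``$\beta\varphi(p)=q$'' and ``$\pl_n\varphi(n)=q$''.

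For the forward direction, I would assume $p\leRF q$ and take the injective $\varphi\colon\omega\to\beta\omega$ with $\varphi(\omega)$ discrete and $\beta\varphi(p)=q$ given by the definition. Setting $x_n=\varphi(n)$, the sequence $(x_n)_{n\in\omega}$ consists of distinct points (by injectivity) and is discrete (since $\varphi(\omega)$ is). Since $\beta\varphi$ is continuous and $p=\pl_n n$ by Remark~\ref{Remark1}(i), Remark~\ref{Remark1}(ii) yields
\[
q=\beta\varphi(p)=\beta\varphi(\pl\nolimits_n n)=\pl\nolimits_n\beta\varphi(n)=\pl\nolimits_n\varphi(n)=\pl\nolimits_n x_n,
\]
which is what is required.

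For the converse, I would assume that $q=\pl_n x_n$ for some discrete sequence $(x_n)_{n\in\omega}$ of distinct points of $\beta\omega$, and define $\varphi\colon\omega\to\beta\omega$ by $\varphi(n)=x_n$. This $\varphi$ is injective and has discrete image. Because $\omega$ is a discrete subspace of $\beta\omega$ and $\beta\omega$ is compact Hausdorff, $\varphi$ extends to a continuous $\beta\varphi\colon\beta\omega\to\beta\omega$, and applying Remark~\ref{Remark1}(i),(ii) exactly as above gives $\beta\varphi(p)=\pl_n\varphi(n)=q$. Hence $p\leRF q$.

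I do not anticipate any real obstacle: the only point worth flagging is that the definition of $\leRF$ demands that $\varphi(\omega)$ be discrete as a subspace of $\beta\omega$, which matches our definition of a discrete sequence, so no subtlety arises at the boundary between ``discrete as a set'' and ``discrete as a sequence'' once distinctness of the $x_n$'s is in hand.
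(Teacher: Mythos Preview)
Your proof is correct and follows essentially the same approach as the paper: both arguments identify the witnessing map $\varphi$ with the sequence $(x_n)$ and use Remark~\ref{Remark1}(i),(ii) to translate between $\beta\varphi(p)=q$ and $q=\pl_n x_n$. The paper merely states this equivalence more tersely as a single biconditional, while you spell out the two directions separately.
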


\begin{proof}
According to Remark~\ref{Remark1}, we have $q=\pl_n x_n$ if and only if $q=\beta \varphi(p)$ for $\varphi\colon 
\omega\to \beta \omega$ defined by $\varphi(n)=x_n$ for $n\in \omega$. Therefore, if $\{x_n: n\in \omega\}$ is  
discrete and consists of distinct points, then $q=\pl_n x_n$ if and only if the map $\varphi$ witnesses that 
$p\leRF q$. \end{proof}

The following theorem was proved by M.~E.~Rudin in \cite{Rudin} (see also \cite[Theorem~16.16]{CN}).

\begin{theorem}[\cite{Rudin}]
\label{Theorem1}
The set of all $\leRF$-predecessors of any ultrafilter $p\in\beta \omega$ is totally $\leRF$-ordered.
\end{theorem}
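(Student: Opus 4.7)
The plan is to show that any two $\leRF$-predecessors $q_1, q_2$ of $p$ are $\leRF$-comparable. I may assume that $p, q_1, q_2$ are pairwise inequivalent nonprincipal ultrafilters (otherwise the conclusion is immediate). By Proposition~\ref{Proposition1} combined with Remark~\ref{Remark_add}, I fix injective maps $\varphi_i\colon \omega\to \omega^*$ with discrete images $D_i:=\varphi_i(\omega)$ and $\beta\varphi_i(q_i)=p$ for $i=1,2$.

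The heart of the argument, and the step I expect to be the main obstacle, is the following dichotomy:
\[
T:=\{n\in\omega:\varphi_1(n)\in\overline{D_2}\}\in q_1\quad\text{or}\quad S:=\{n\in\omega:\varphi_2(n)\in\overline{D_1}\}\in q_2.
\]
To establish it, I will suppose for contradiction that $T':=\omega\setminus T\in q_1$ and $S':=\omega\setminus S\in q_2$, and set $D_1':=\varphi_1(T')$, $D_2':=\varphi_2(S')$. By construction $D_1'\cap\overline{D_2'}=D_2'\cap\overline{D_1'}=\emptyset$, so $D_1'\cup D_2'$ is a countable discrete subset of $\omega^*$. Invoking the classical fact that every such set is $C^*$-embedded in $\beta\omega$, the closure $\overline{D_1'\cup D_2'}$ is canonically homeomorphic to $\beta(D_1'\cup D_2')$; under this identification $D_1'$ and $D_2'$ become disjoint subsets of the discrete ``base,'' so their closures in $\beta\omega$ are disjoint. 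This contradicts $p\in\overline{D_1'}\cap\overline{D_2'}$, which holds because $T'\in q_1$ and $S'\in q_2$.

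Once the dichotomy is in hand, I may assume without loss of generality that $T\in q_1$. Then $\beta\varphi_2$ is a homeomorphism of $\beta\omega$ onto $\overline{D_2}$; let $h$ denote its inverse. For $n\in T$, set $\psi(n):=h(\varphi_1(n))$; then $\psi\colon T\to\beta\omega$ is injective (since $\varphi_1$ is) and has discrete range (since $h$ is a homeomorphism and $\{\varphi_1(n):n\in T\}$ is discrete). By Remark~\ref{Remark1}(ii) applied to the continuous map $h$, the $q_1$-limit of $\psi(n)$ equals $h(p)=q_2$. Re-indexing $\psi$ along a bijection $\omega\to T$ and invoking Remark~\ref{Remark_add} together with Proposition~\ref{Proposition1} yields $q_1\leRF q_2$. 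The symmetric case $S\in q_2$ gives $q_2\leRF q_1$.
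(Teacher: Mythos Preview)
The paper does not supply a proof of Theorem~\ref{Theorem1}; it merely quotes the result from Rudin~\cite{Rudin} and refers to \cite[Theorem~16.16]{CN}. So there is nothing in the paper to compare against.

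Your argument is correct and is in fact the classical proof. The dichotomy you isolate is exactly the key step: the sets $D_1'=D_1\setminus\overline{D_2}$ and $D_2'=D_2\setminus\overline{D_1}$ are separated countable discrete subsets of the $F$-space $\beta\omega$, hence (by the $C^*$-embedding of countable sets in $F$-spaces, cf.\ \cite[9H.1]{GJ}) have disjoint closures; since $p$ lies in $\overline{D_1}\cap\overline{D_2}$, it must lie in $\overline{D_1\cap\overline{D_2}}$ or $\overline{D_2\cap\overline{D_1}}$, which is your $T\in q_1$ or $S\in q_2$. The pull-back step via the homeomorphism $\beta\varphi_2\colon\beta\omega\to\overline{D_2}=\beta D_2$ is likewise standard and correctly executed.

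Two cosmetic remarks. First, the reference to Remark~\ref{Remark_add} at the outset is unnecessary: the definition of $\leRF$ already gives you genuinely injective $\varphi_i$, and Proposition~\ref{Proposition1} alone suffices. Second, the paper's definition of $\leRF$ has $\varphi_i\colon\omega\to\beta\omega$, not $\omega^*$; your restriction to $\omega^*$ is harmless (one can always modify $\varphi_i$ off an element of $q_i$ once the trivial cases are excluded), but it is not needed anywhere in the argument, since the $C^*$-embedding fact holds for countable discrete subsets of $\beta\omega$ just as well.
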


Recall that a point $x$ in a topological space $X$ is a \emph{$P$-point} if the intersection of any
countable family of neighborhoods of $x$ is also a neighborhood of $x$, $x$ is a \emph{weak $P$-point} if  
$x\notin \overline D$ whenever $D$ is a countable subset of $X\setminus \{x\}$, and 
$x$ is a \emph{discretely weak $P$-point} if  
$x\notin \overline D$ whenever $D$ is a countable discrete subset of $X\setminus \{x\}$. It is well known that 
$p\in \omega^*$ is a $P$-point in $\omega^*$ if, given any $f\colon \omega \to \omega$, there exists an $A \in p$ 
such that the restriction of $f$ to $A$ is either finite-to-one or constant, or, equivalently, given any 
sequence $(A_n)_{n\in \omega}$ of elements of $p$, there exists an $A\in p$ such that $A\subset ^* A_n$ (i.e., 
$|A\setminus A_n| <\omega$) for each $n\in \omega$. The existence of weak $P$-points in 
$\omega^*$ can be proved in ZFC \cite{Kunen1978}, while the existence of $P$-points in $\omega^*$ is independent 
of ZFC (see, e.g., \cite{Shelah}). Note that there exist discretely weak $P$-points in $\omega^*$ which are not 
weak $P$-points: an example of such a point is any lonely point in the sense of Simon, whose existence in 
$\omega^*$ was proved by Verner in~\cite{Verner}. We also mention that the non-discretely weak $P$-points in 
$\omega^*$ are precisely those of van~Mill's type $A_1$~\cite{vanMill16}.

Related types of ultrafilters are selective and $Q$-point ultrafilters (see, e.g., \cite{Booth}, \cite{Miller}). 
An ultrafilter $p\in \omega^*$ is said to be \emph{selective}, or \emph{Ramsey}, if, given any $f\colon \omega 
\to \omega$, there exists an $A \in p$ such that the restriction of $f$ to $A$ is either one-to-one or constant, 
or, equivalently, given any decreasing sequence $(A_n)_{n\in \omega}$ of elements of $p$, there exists an $A\in p$ 
such that $|A\cap (A_n\setminus A_{n+1})| \le 1$ for each $n\in \omega$. An ultrafilter $p\in \omega^*$ is a 
\emph{$Q$-point}, or \emph{rare}, ultrafilter if, given any finite-to-one function $f\colon \omega \to \omega$, 
there exists an $A \in p$ such that the restriction of $f$ to $A$ is one-to-one. 

The proof of the following theorem is essentially contained in \cite{CN}.

\begin{theorem}
\label{Theorem2}
\textup{(i)} An ultrafilter $p\in \omega^*$ is minimal in the Rudin--Keisler order if and only if $p$ is 
selective. 

\textup{(ii)} An ultrafilter $p\in \omega^*$ is minimal in the Rudin--Blass order if and only if $p$ is 
a $Q$-point.

\textup{(iii)} An ultrafilter $p\in \omega^*$ is minimal in the Rudin--Frol\'\i k order if and only if $p$ 
is a discretely weak $P$-point in $\omega^*$. 
\end{theorem}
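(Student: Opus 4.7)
My plan is to prove all three equivalences by a common template. The easy direction in each part (the $\Leftarrow$ in (i), (ii), and (iii)) is a direct application of Remark~\ref{Remark_add} combined with the defining property of the ultrafilter class, while the hard direction uses minimality to force an equivalence $\beta f(p)\equiv p$ and then extracts the required function-restriction via the standard fixed-point lemma: for any $h\colon\omega\to\omega$, $\beta h(p)=p$ implies $\{n:h(n)=n\}\in p$. This lemma follows from the classical three-colouring theorem for functional graphs: if $h(n)\neq n$ on a set $B\in p$, one partitions $B=B_0\cup B_1\cup B_2$ with $h(B_i)\cap B_i=\varnothing$, whence some $B_i\in p$ while $h^{-1}(B_i)\in p$ (from $\beta h(p)=p$), so their intersection $\varnothing$ lies in $p$, a contradiction.

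For (i)($\Leftarrow$), if $p$ is selective and $q\leRK p$ is witnessed by $f$ with $q\in\omega^*$, selectivity gives $A\in p$ on which $f$ is one-to-one or constant; constancy is incompatible with $q\in\omega^*$, so $p\equiv q$ by Remark~\ref{Remark_add}. For (i)($\Rightarrow$), given $f\colon\omega\to\omega$, let $q=\beta f(p)$; if $q$ is principal with $\{n_0\}\in q$ then $f^{-1}(\{n_0\})\in p$ is a constant fibre, while otherwise $q\in\omega^*$ and minimality gives $q\equiv p$, so Remark~\ref{Remark_add} furnishes a bijection $\tilde g\colon\omega\to\omega$ with $\beta\tilde g(p)=q=\beta f(p)$; the fixed-point lemma applied to $\tilde g^{-1}\circ f$ then yields $A\in p$ on which $f=\tilde g$, hence $f\restriction A$ is one-to-one. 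Part~(ii) is the same argument restricted to finite-to-one $f$; here $\beta f(p)$ is automatically non-principal because every fibre of $f$ is finite and therefore not in $p$.

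For (iii)($\Leftarrow$), given $q\leRF p$ with $q\in\omega^*$, Proposition~\ref{Proposition1} provides a discrete sequence $(x_n)$ of distinct points in $\beta\omega$ with $p=\ql_n x_n$. If $N:=\{n:x_n\in\omega^*\setminus\{p\}\}\in q$ then $\{x_n:n\in N\}$ is a countable discrete subset of $\omega^*\setminus\{p\}$ with $p$ in its closure, contradicting the discretely weak $P$-point property; hence $\omega\setminus N\in q$, so $x_n\in\omega$ on a set in $q$, and the map $n\mapsto x_n$ (one-to-one there with $\beta$-limit $p$) witnesses $p\equiv q$ via Remark~\ref{Remark_add}. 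For (iii)($\Rightarrow$), assume $p$ is RF-minimal but, for contradiction, not a discretely weak $P$-point. Fix a discrete $D=\{d_n:n\in\omega\}\subseteq\omega^*\setminus\{p\}$ with $p\in\overline D$, and by compactness of $\beta\omega$ choose $q\in\omega^*$ with $p=\ql_n d_n$; Proposition~\ref{Proposition1} gives $q\leRF p$, so minimality yields a bijection $\sigma$ with $\beta\sigma(p)=q$. Setting $\psi(n):=d_{\sigma(n)}$ produces an injection $\psi\colon\omega\to\omega^*$ with discrete image and $\beta\psi(p)=p$. The crux is to derive a contradiction from this. I will choose pairwise disjoint $A_n\in\psi(n)$ by induction (using that discreteness of $\psi(\omega)$ in $\beta\omega$ gives, for each $n$, some $B_n\in\psi(n)$ with $\omega\setminus B_n\in\psi(m)$ for every $m\neq n$, so successive thinning stays in $\psi(n)$), and define $f(k)=n$ for $k\in A_n$ and $f(k)=k+1$ otherwise, so that $\beta f\circ\psi=\mathrm{id}_\omega$ on $\omega$ and hence $\beta f(p)=p$. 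The fixed-point lemma then gives $F:=\{k:f(k)=k\}=\{k:k\in A_k\}\in p$, while $p=\pl_n\psi(n)$ forces $G:=\{n:F\in\psi(n)\}\in p$, so $F\cap G\neq\varnothing$. For any $n\in F\cap G$ the element $F\cap A_n\in\psi(n)$ is contained in $\{n\}$ by pairwise disjointness of the $A_k$'s (since $k\in F\cap A_n$ entails $k\in A_k\cap A_n$, forcing $k=n$), contradicting non-principality of $\psi(n)$.

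The main obstacle is this concluding step of (iii)($\Rightarrow$), namely the assertion that no $p\in\omega^*$ can be a nontrivial $p$-limit of a discrete sequence of distinct points in $\omega^*\setminus\{p\}$; the combinatorial manoeuvre of extracting pairwise disjoint sets $A_n\in\psi(n)$ and then invoking the fixed-point lemma is the technical heart of the entire theorem.
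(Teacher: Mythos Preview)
Your proof is correct. The paper's own proof is essentially a citation---assertion (i) is referred to Theorem~9.6 of \cite{CN}, assertion (iii) to Lemma~16.14 of \cite{CN}, and assertion (ii) is said to follow ``easily from definitions and Remark~\ref{Remark_add}.'' Your argument for (i) and (ii) is exactly the unpacking that the paper gestures at: the combination of Remark~\ref{Remark_add} with the Kat\v{e}tov fixed-point lemma ($\beta h(p)=p\Rightarrow\{n:h(n)=n\}\in p$) is the standard route, and you carry it out cleanly. For (iii) you supply a fully self-contained proof in place of the citation; the key step---showing that no $p\in\omega^*$ can satisfy $\beta\psi(p)=p$ for an injection $\psi\colon\omega\to\omega^*$ with discrete range, by building disjoint $A_n\in\psi(n)$, collapsing via $f$, and then deriving $\{n\}\in\psi(n)$ from the fixed-point set---is precisely the combinatorial core of the Comfort--Negrepontis argument, and you reconstruct it correctly. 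The only place worth a small remark is in (iii)($\Leftarrow$): when you pass from $\omega\setminus N\in q$ to ``$x_n\in\omega$ on a set in $q$,'' you are silently using that $\{n:x_n=p\}$ is at most a singleton (since the $x_n$ are distinct) and hence not in the nonprincipal $q$; this is true, but it would not hurt to say it.
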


\begin{proof}
Assertion (i) is Theorem~9.6 of~\cite{CN}. Assertion (ii) (as well as (i)) easily follows from  
definitions and Remark~\ref{Remark_add}.  Assertion (iii) is 
Lemma~16.14 of~\cite{CN}. \end{proof} 

\begin{corollary}
\label{Corollary1}
If $p, q\in \omega^*$, $(x_n)_{n\in \omega}$ is a discrete sequence of distinct 
points in $\omega^*$,  and $q=\pl_n x_n$, then  
$p$ is not a  discretely weak $P$-point. 
\end{corollary}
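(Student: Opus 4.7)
The plan is to deduce the corollary directly from Proposition~1 and Theorem~2(iii).

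First, Proposition~1 translates the hypothesis that $q=\pl_n x_n$ for a discrete sequence $(x_n)$ of distinct points in $\omega^*\subset \beta\omega$ into the Rudin--Frol\'{\i}k relation $p\leRF q$, with the witnessing map $\varphi\colon \omega\to \beta\omega$, $\varphi(n)=x_n$. I would then check that this comparison is strict: because the $x_n$ are pairwise distinct, the set $\{n:x_n=q\}$ has at most one element, and since $p$ is nonprincipal, it does not belong to $p$; hence the $p$-limit defining $q$ is nontrivial, and $p$ is not Rudin--Frol\'{\i}k equivalent to $q$, so $p<_{\mathrm{RF}} q$ strictly inside $\omega^*$.

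Next, I invoke Theorem~2(iii), which characterizes discretely weak $P$-points in $\omega^*$ as exactly the $\leRF$-minimal ultrafilters of $\omega^*$. The established strict Rudin--Frol\'{\i}k comparison places $p$ in a nontrivial chain in the order (cf.\ Theorem~1 for the chain structure of predecessor sets), which is incompatible with $p$ being $\leRF$-minimal; by the characterization supplied by Theorem~2(iii), $p$ therefore fails to be a discretely weak $P$-point, as claimed.

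The main step that requires genuine verification is the nontriviality of the $p$-limit, which follows immediately from the distinctness of the $x_n$ and the non-principality of $p$. Beyond this routine check, the argument is a straightforward two-step combination of Proposition~1 (converting the discrete-sequence $p$-limit hypothesis into an $\leRF$-comparison) and Theorem~2(iii) (identifying failure of minimality with failure of the discretely weak $P$-point property), and I do not expect any substantial technical obstacle.
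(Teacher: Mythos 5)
Your argument reproduces the paper's own two-step route (Proposition~\ref{Proposition1} followed by Theorem~\ref{Theorem2}\,(iii)), but the final inference is a genuine gap --- and it is essentially the same gap that occurs in the paper's proof. From $q=\pl_n x_n$ you correctly obtain $p\leRF q$, and your check that the limit is nontrivial is fine; but $p\leRF q$ makes $p$ a \emph{predecessor} of $q$, whereas $\leRF$-minimality of $p$ only forbids $p$ from having strict \emph{predecessors}. Having a strict successor is perfectly compatible with being minimal, so the step ``the established strict Rudin--Frol\'{\i}k comparison \dots\ is incompatible with $p$ being $\leRF$-minimal'' does not follow; Theorem~\ref{Theorem1} does not help either, since it only describes the set of predecessors of a fixed ultrafilter. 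In fact the argument proves too much: $\omega^*$ is compact, so \emph{every} $p\in\omega^*$ is the index of such a limit (take any discrete sequence of distinct points in $\omega^*$ and form its $p$-limit), and your reasoning would therefore show that no ultrafilter in $\omega^*$ is a discretely weak $P$-point, contradicting the ZFC existence of weak $P$-points cited in the paper itself.

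What Proposition~\ref{Proposition1} and Theorem~\ref{Theorem2}\,(iii) actually yield is a conclusion about $q$, not about $p$: your nontriviality check shows $q\ne x_n$ for all $n$, so $q$ lies in the closure of the countable discrete set $\{x_n:n\in\omega\}\subset\omega^*\setminus\{q\}$ and hence is not a discretely weak $P$-point (equivalently, not $\leRF$-minimal). To conclude something about $p$ one would need $p$ itself to be the limit point, i.e., a hypothesis of the form $p=\ql_n x_n$ rather than $q=\pl_n x_n$. The paper's proof contains the same non sequitur (``if $p$ is a discretely weak $P$-point, then $p$ and $q$ are equivalent by Theorem~\ref{Theorem2}\,(iii)''), so you have faithfully reconstructed the intended argument; but as written neither version establishes the stated corollary, and the statement itself appears to require a correction (either the conclusion should concern $q$, or the roles of $p$ and $q$ in the limit hypothesis should be interchanged).
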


\begin{proof}
By Proposition~\ref{Proposition1} $p\leRF q$. If $p$ is a discretely weak $P$-point, then $p$ and $q$ are 
equivalent by Theorem~\ref{Theorem2}\,(iii), so that $q$ is a discretely weak $P$-point as well. But a discretely 
weak $P$-point cannot be limit for a discrete set. 
\end{proof}

In the class of $P$-points  $\leRK$ coincides with $\leRB$. Moreover, the following assertion holds. 

\begin{proposition}
\label{proposition-added1}
If $p,q\in \omega^*$ and $p$ is a $P$-point, then $q\leRK p$ if and only if~$q\leRB p$.
\end{proposition}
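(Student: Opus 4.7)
The implication $q\leRB p \implies q\leRK p$ is immediate from the definitions, so only the forward direction requires work. Assume $p$ is a $P$-point and $q\leRK p$, so there exists $f\colon\omega\to\omega$ with $\beta f(p)=q$. My plan is to use the $P$-point characterization of $p$ recalled in the excerpt to replace $f$ on a set in $p$ by a finite-to-one function, and then extend it to a globally finite-to-one function without changing the $p$-image.

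Since $p$ is a $P$-point, there exists $A\in p$ such that $f\restriction A$ is either constant or finite-to-one. The constant alternative is impossible: if $f\restriction A\equiv k$, then $\{k\}=f(A)\in \beta f(p)=q$, contradicting $q\in\omega^*$. Hence $f\restriction A$ is finite-to-one.

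Next I extend $f\restriction A$ to a finite-to-one function $g\colon\omega\to\omega$. If $\omega\setminus A$ is finite, set $g(n)=f(n)$ for $n\in A$ and $g(n)=0$ otherwise; then every fiber $g^{-1}(k)$ differs from $f^{-1}(k)\cap A$ by at most finitely many points, so $g$ is finite-to-one. If $\omega\setminus A$ is infinite, enumerate it as $\{b_0,b_1,\dots\}$ and set $g(b_i)=i$, while keeping $g\restriction A=f\restriction A$; then each fiber $g^{-1}(k)$ is the union of the finite set $f^{-1}(k)\cap A$ with at most one additional point, hence is finite.

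Finally, since $g$ and $f$ agree on $A\in p$, for every $B\subset\omega$ we have $f^{-1}(B)\cap A=g^{-1}(B)\cap A$, so $f^{-1}(B)\in p$ iff $g^{-1}(B)\in p$; using the explicit formula $\beta f(p)=\{B:f^{-1}(B)\in p\}$ recalled in the preliminaries, this gives $\beta g(p)=\beta f(p)=q$, witnessing $q\leRB p$. The only genuinely non-trivial step is ruling out the constant alternative, which is handled by the nonprincipality of $q$; the rest is a routine manipulation of functions on $\omega$.
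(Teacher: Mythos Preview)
Your proof is correct and follows essentially the same route as the paper's: use the $P$-point property to find $A\in p$ on which $f$ is finite-to-one, then extend $f\restriction A$ to a globally finite-to-one $g$ agreeing with $f$ on $A$, so that $\beta g(p)=\beta f(p)=q$. The only cosmetic differences are that the paper silently absorbs the constant case (impossible since $q\in\omega^*$) and arranges $\omega\setminus A$ infinite up front, then extends via a bijection $\omega\setminus A\to\omega\setminus f(A)$ rather than your case split; neither variant affects the substance of the argument.
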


\begin{proof}
Only the `only if' part needs to be proved. Suppose that $q\leRK p$ and let $f\colon \omega\to \omega$ be a 
function for which $\beta f(p)=q$. Take $A\in p$ such that the restriction of $f$ to $A$ is finite-to-one and 
both sets $\omega\setminus A$ and  $\omega\setminus f(A)$ are infinite. Let $\varphi$ be any bijection between 
$\omega\setminus A$ and $\omega\setminus f(A)$. The function $g\colon \omega\to \omega$ coinciding with $f$ on $A$ and with $\varphi$  
on $\omega\setminus A$ is finite-to-one. Obviously, $\beta g(p)=q$. Therefore, $q\leRB p$.
\end{proof}

The following amazing theorem of van~Mill shows that the relation $\leRF$ is very much smaller than 
$\leRK$. 

\begin{theorem}[{see \cite[Theorem~4.5.1]{vanMill}}]
\label{vanMillstheorem}
There is a finite-to-one function $\pi\colon \omega\to\omega$ such that,  given any $p\in \omega^*$, 
there is a weak $P$-point $q\in \omega^*$ for which $\beta\pi (q)=p$ \textup(and hence $p\leRB q$ and $p\leRK 
q$\textup). 
\end{theorem}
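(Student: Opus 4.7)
The plan is to use a single finite-to-one projection arising from a partition of $\omega$ into blocks of unboundedly growing size and then, for each $p\in\omega^*$, build the required weak $P$-point inside the fiber $(\beta\pi)^{-1}(p)$ by adapting Kunen's ZFC construction of weak $P$-points. Concretely, I would fix a partition $\omega=\bigsqcup_{n\in\omega}B_n$ with $|B_n|=n+1$ and define $\pi\colon\omega\to\omega$ by $\pi(k)=n$ for $k\in B_n$; this $\pi$ is finite-to-one and does not depend on $p$. For any $p\in\omega^*$, the family $\mathcal F_0=\{\pi^{-1}(A):A\in p\}$ is a filter on $\omega$, and any ultrafilter $q$ extending $\mathcal F_0$ automatically satisfies $\beta\pi(q)=p$; in particular $(\beta\pi)^{-1}(p)$ is nonempty, and the whole task reduces to finding a weak $P$-point of $\omega^*$ inside this fiber.

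To produce such a $q$, I would enumerate all countable subsets of $\omega^*$ as $\{D_\alpha:\alpha<2^{\mathfrak c}\}$ and construct, by transfinite recursion of length $2^{\mathfrak c}$, an increasing chain of filters $\mathcal F_\alpha$ on $\omega$ starting from $\mathcal F_0$, each of cardinality less than $2^{\mathfrak c}$, with unions taken at limits; at every successor stage $\alpha+1$ one adds a set $E_\alpha$ whose closure in $\beta\omega$ satisfies $\overline{E_\alpha}\cap\overline{D_\alpha}\subset\omega$. Any ultrafilter $q$ extending $\bigcup_\alpha\mathcal F_\alpha$ will then satisfy $\beta\pi(q)=p$ and $q\notin\overline{D}$ for every countable $D\subset\omega^*\setminus\{q\}$, so $q$ is a weak $P$-point. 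The final implications $p\leRB q$ and $p\leRK q$ are immediate because $\pi$ itself is the required finite-to-one witness.

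The genuine obstacle is the successor step: given a filter $\mathcal F_\alpha\supset\mathcal F_0$ of cardinality less than $2^{\mathfrak c}$, one must produce an $E_\alpha$ compatible with $\mathcal F_\alpha$ whose closure misses $\overline{D_\alpha}$. This requires Kunen's independent-matrix (or ``$\omega_2$-OK filter'') machinery: one first constructs, relative to $\mathcal F_0$, a $(2^{\mathfrak c},2^{\mathfrak c})$-independent family of subsets of $\omega$ and then uses suitable Boolean combinations from this family to separate $\mathcal F_\alpha$ from the countably many ultrafilters in $D_\alpha$ while preserving the filter property. The crucial reason this construction can be carried out \emph{inside} $\mathcal F_0$ rather than on all of $\omega$ is that the fibers of $\pi$ grow without bound: every set of the form $\pi^{-1}(A)$ with $A\in p$ contains arbitrarily large finite fibers, so, combinatorially, the trace on $\mathcal F_0$ behaves like a full copy of $\omega$ and Kunen's independence construction transfers verbatim. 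This transfer, rather than the transfinite bookkeeping, is where the real work of the theorem lies.
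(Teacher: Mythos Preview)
The paper does not give its own proof of this theorem; it is quoted from van~Mill's survey \cite[Theorem~4.5.1]{vanMill} as a known result, so there is no in-paper argument to compare against. Your outline, however, contains a genuine cardinality error that makes the construction collapse.

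You propose to enumerate \emph{all} countable subsets of $\omega^*$ as $\{D_\alpha:\alpha<2^{\mathfrak c}\}$ and run a recursion of length $2^{\mathfrak c}$, at each successor stage using a piece of a ``$(2^{\mathfrak c},2^{\mathfrak c})$-independent family of subsets of $\omega$'' to separate $\mathcal F_\alpha$ from $D_\alpha$. But $|\mathcal P(\omega)|=\mathfrak c$, so no independent family (or independent matrix) on $\omega$ can have more than $\mathfrak c$ columns; the object you invoke does not exist. Once the independence is exhausted---and it is exhausted after at most $\mathfrak c$ stages---nothing prevents the accumulated filter from being, or extending uniquely to, an ultrafilter $q$ that lies in $\overline{D_\alpha}$ for some later $\alpha$, at which point no compatible $E_\alpha$ with $\overline{E_\alpha}\cap\overline{D_\alpha}=\varnothing$ can be found. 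The bookkeeping of length $2^{\mathfrak c}$ is therefore not executable.

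Kunen's actual argument (reproduced in van~Mill's chapter) avoids this by \emph{not} enumerating the $2^{\mathfrak c}$ countable subsets of $\omega^*$. Instead one introduces the auxiliary notion of a $\kappa$-OK point and proves, as a separate lemma, that every $\omega_1$-OK point of $\omega^*$ is automatically a weak $P$-point; this single combinatorial property disposes of all countable sets at once. One then constructs a $\mathfrak c$-OK ultrafilter extending $\mathcal F_0$ by a recursion of length $\mathfrak c$, which matches the size of the available independent matrix. Your parenthetical mention of ``$\omega_2$-OK filter'' machinery shows you are aware of the notion, but you have folded it into the wrong framework: the OK property is the \emph{target} of the recursion, not a tool inside a longer direct-avoidance recursion. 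Your observation that the growing fibres of $\pi$ allow the independent-matrix construction to be carried out relative to $\mathcal F_0$ is, on the other hand, exactly the right idea for the relativized version.
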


It is well known that there exist $2^{2^\omega}$ $\leRK$-incomparable (and hence $\leRB$- and 
$\leRF$-incomparable) ultrafilters in $\omega^*$ \cite{Rudin-Shelah}. However, the problem of the existence of 
\emph{$\leRK$-incompatible} (i.e., having no common $\leRK$-predecessor)  ultrafilters is much more complicated. 
On the one hand, CH implies the existence of $2^{2^\omega}$ nonequivalent $\leRK$-minimal ultrafilters in 
$\omega^*$ \cite[Sec.~8, Corollary~8]{Blass-thesis}. Clearly, such ultrafilters cannot be compatible in any of the 
orders $\leRK$, $\leRB$ and $\leRF$. On the other hand, it is consistent with ZFC that all ultrafilters in 
$\omega^*$ are \emph{nearly coherent}, i.e., even $\leRB$-compatible \cite{NCF}. Finally, there exist (in ZFC) at 
least $2^\omega$ nonequivalent (and even $\leRK$-incomparable) weak $P$-points in $\omega^*$ 
\cite{Kunen1978}, and such points are $\leRF$-minimal and hence $\leRF$-incompatible. 

\section{Discrete Ultrafilters}

In \cite{Baumgartner} Baumgartner introduced the notion of an $I$-ultrafilter and related classes of ultrafilters. 

\begin{definition}[\cite{Baumgartner}]
Let $I$ be a family of subsets of a set $X$ such that $I$ contains all
singletons and is closed under taking subsets. An ultrafilter $p$ on $\omega$ 
is said to be an \emph{$I$-ultrafilter} if, for any $f\colon \omega \to X$, there is an $A \in p$ such
that $f(A) \in  I$.

In the case where $X=\mathbb R$ and $I$ is the family of all discrete (scattered, measure zero, nowhere dense) 
subsets of $\mathbb R$, an $I$-ultrafilter is said to be \emph{discrete} (respectively, \emph{scattered}, 
\emph{measure zero}, \emph{nowhere dense}). 
\end{definition}

\begin{remark}
\label{P-discrete}
Baumgartner also proved that \emph{if $I=\{Y\subset 2^\omega: Y$ is finite or has order type of $\omega$ or 
$\omega+1\}$, then the nonprincipal $I$-ultrafilters are exactly the $P$-points of $\omega^*$.} (Here $2^\omega$ 
is the Cantor set with the lexicographic order.) This immediately implies that \emph{any $P$-point is discrete}. 
\end{remark}

Thus, we have 
$$ 
\text{$P$-point$\implies$discrete$\implies$scattered$\implies$measure zero$\implies$nowhere dense}.
$$
Under Martin's axiom none of these implications reverses~\cite{Baumgartner}. It makes no sense to speak 
about their reversibility without additional set-theoretic assumptions, because the nonexistence of nowhere 
dense ultrafilters is consistent with ZFC~\cite{Shelah}.  

Considering families $I$ of discrete subsets of other spaces $X$ and imposing 
assumptions on $f\colon \omega\to X$, we obtain potentially different classes of discrete-like ultrafilters. 

\begin{definition}
Let $X$ be a space. We say that an ultrafilter $p$ on $\omega$ is 
\emph{$X$-discrete} (\emph{finitely-to-one $X$-discrete}, \emph{injectively $X$-discrete}) if,  for any  
(respectively, for any finite-to-one, for any one-to-one) function  $f\colon \omega \to X$, there is an $A \in p$ 
such that $f(A)$ is discrete in~$X$. For $X=\mathbb R$, we write simply ``discrete'' instead of ``$\mathbb 
R$-discrete.'' 
\end{definition}

\begin{remark}
\label{Remark2}
Note that any $\omega^*$-discrete (in any sense) 
ultrafilter $p$ is $\beta\omega$-discrete (in the same sense). Indeed, take any $f\colon \omega\to 
\beta\omega $. If $A=f^{-1}(\omega)\in p$, then $f(A)\subset\omega\subset \beta\omega$ is discrete; otherwise $B=\omega\setminus A\in 
p$, and we can fix any distinct $q_n\in \omega^*\setminus f(B)$, $n\in \omega$, and consider the map $g\colon 
\omega\to \omega^*$ defined by $g(n)=f(n)$ for $n\in B$ and $g(n)=q_n$ for $n\in A$. Let $C\in p$ 
be such that $g(C)$ is discrete. Then $C\cap B\in p$ and $f(C\cap B)=g(C\cap B)$ is discrete. 
\end{remark}
 
Injectively discrete and injectively $\omega^*$-discrete ultrafilters were considered in \cite{BB}  (where they 
were called simply ``discrete'' and ``$\omega^*$-discrete''). The following proposition is 
similar to Proposition~12 of~\cite{BB}. 

\begin{proposition}
Every discrete \textup(finitely-to-one discrete, injectively discrete\textup) ultrafilter is $X$-discrete 
\textup(respectively, finitely-to-one $X$-discrete, injectively $X$-discrete\textup) for any space~$X$.
\end{proposition}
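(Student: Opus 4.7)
The plan is to reduce the statement to the defining case $X=\mathbb R$ by manufacturing, from the given $f\colon\omega\to X$, an auxiliary function $F\colon\omega\to\mathbb R$ whose fibres agree with those of $f$ and for which discreteness of $F(A)\subseteq\mathbb R$ forces discreteness of $f(A)\subseteq X$. The finite case is disposed of first: if $f(\omega)$ is finite, then since $\omega\in p$ is a finite disjoint union of the sets $f^{-1}(y)$, one such fibre lies in $p$ and $f$ sends it to a singleton. This case does not arise when $f$ is finite-to-one or injective.

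Assume therefore that $f(\omega)$ is countably infinite. Using complete regularity of $X$, I choose countably many continuous functions $g_k\colon X\to[0,1]$ that pairwise separate the points of $f(\omega)$ and assemble them into a continuous map $g\colon X\to[0,1]^\omega$, $g(x)=(g_k(x))_{k\in\omega}$, whose restriction to $f(\omega)$ is injective. The set $g(f(\omega))$ is a countable subspace of the metrizable space $[0,1]^\omega$, hence a countable metric space; such a space is zero-dimensional because, for every point $z$, the set $\{d(z,y):y\ne z\}$ is countable and any radius outside it gives a clopen open ball around $z$. Therefore $g(f(\omega))$ embeds in the Cantor set $2^\omega$, and so in $\mathbb R$, via a homeomorphism $\iota$ onto its image.

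Setting $F=\iota\circ g\circ f$, both $\iota$ and $g\restriction f(\omega)$ are injective, so $F$ has the same fibres as $f$; in particular $F$ is finite-to-one, respectively one-to-one, whenever $f$ is. Applying the hypothesis on $p$ to $F$, I obtain $A\in p$ with $F(A)$ discrete in $\mathbb R$. Since $\iota$ is a topological embedding, $g(f(A))$ is discrete in $g(f(\omega))\subseteq[0,1]^\omega$; and since $g$ is continuous with $g\restriction f(\omega)$ injective, the open neighbourhoods witnessing this discreteness pull back through $g$ to open neighbourhoods of $X$ that witness the discreteness of $f(A)$ in $X$.

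The only non-routine step is the embedding of $g(f(\omega))$ into $\mathbb R$, supplied by the elementary fact that countable metric spaces are zero-dimensional. The pitfall worth flagging is that the detour through $[0,1]^\omega$ cannot be skipped by trying to embed $f(\omega)\subseteq X$ into $\mathbb R$ directly: a countable Tychonoff space need not be metrizable or even first countable---the Arens--Fort space is the standard example---so composing first with $g$ into a metrizable ambient space is essential.
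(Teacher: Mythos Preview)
Your proof is correct and follows the same overall strategy as the paper: build a continuous injection from $f(\omega)$ into $2^\omega\subset\mathbb R$, apply the discreteness hypothesis to the composite map $\omega\to\mathbb R$, and pull the resulting discreteness back along the injection (a continuous injection onto a discrete space has discrete domain).

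The paper, however, bypasses your detour through $[0,1]^\omega$. It uses directly that the countable Tychonoff space $f(\omega)$ is zero-dimensional, so any two of its points are separated by a continuous $\{0,1\}$-valued function defined on $f(\omega)$; the diagonal of countably many such separators is already a continuous injection $g\colon f(\omega)\to 2^\omega$. This map need not be an embedding, but an injection is all that is required for the pull-back step. Your closing remark that the passage through a metrizable ambient space is ``essential'' is therefore slightly off: what fails for the Arens--Fort space is a topological \emph{embedding} into $2^\omega$, but a continuous injection into $2^\omega$ exists for every countable Tychonoff space, and that suffices here. Your route also invokes the (standard but not entirely trivial) theorem that every zero-dimensional second-countable space embeds in $2^\omega$, which the paper's more direct construction avoids; on the other hand, your argument for zero-dimensionality via countable-range metrics is pleasantly concrete.
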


\begin{proof}
Let $p$ be  a discrete ultrafilter, and let $f \colon \omega\to X$ be any map. We set $x_n=f(n)$ for 
$n\in \omega$. For each pair $(x_k, x_m)$ of different points in $f(\omega)$, take a continuous function 
  $f_{(k,m)}\colon f(\omega)\to \{0,1\}$ such that $f_{(k,m)}(x_k)=0$ and $f_{(k,m)}(x_m)=1$ (it exists because 
$f(\omega)$ is countable). The diagonal  $g=\Delta \{f_{(k,m)}: x_k\ne x_m\}$ is a one-to-one continuous map 
of $f(\omega)$ to the Cantor set $2^\omega\subset \mathbb R$. Since $p$ is discrete, it follows that there is an 
$A\in p$ for which $g(f(A))$ is discrete. Thus, $f(A)$ admits a one-to-one continuous map onto a discrete space;
therefore, $f(A)$ is discrete. For finitely-to-one and injectively discrete ultrafilters, the proof is the same. 
\end{proof}

Thus, any discrete ultrafilter is $X$-discrete for any space $X$, but it is unclear whether, say, an 
$\omega^*$-discrete ultrafilter is discrete. The questions of whether injective discreteness implies 
finite-to-one discreteness and whether finite-to-one discreteness implies discreteness are not clear either, 
although the answer is unlikely to be positive. However, the nonexistence of injectively and finitely-to-one  
discrete ultrafilters, as well as that of discrete ones, in consistent with ZFC, because it follows from the 
nonexistence of nowhere dense ultrafilters. The following argument was kindly communicated to the authors by Taras 
Banakh. 

\begin{proposition}
The nonexistence of nowhere dense ultrafilters implies the nonexistence of 
injectively discrete ultrafilters.
\end{proposition}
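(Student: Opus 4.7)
The plan is to prove the contrapositive, namely, that every injectively discrete ultrafilter $p$ on $\omega$ is already nowhere dense. The key elementary ingredient is that every discrete subspace of $\mathbb{R}$ is nowhere dense: if $D \subset \mathbb{R}$ is discrete and some interval $(a,b)$ were contained in $\overline{D}$, then $D \cap (a,b)$ would be simultaneously discrete and dense in $(a,b)$, which is impossible, since any point of $D \cap (a,b)$, being isolated in $D$, would admit an $\mathbb{R}$-neighborhood meeting $D$ only at itself, contradicting density.

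Given an arbitrary $f \colon \omega \to \mathbb{R}$, I would perturb it into a nearby injection $g \colon \omega \to \mathbb{R}$. Setting $g(n) = f(n) + \varepsilon_n$, the $\varepsilon_n$ are chosen by recursion on $n$ inside the interval $(0, 2^{-n})$, excluding at each step the finitely many values that would force a collision $g(n) = g(m)$ with some $m < n$; this is possible because one excludes only finitely many reals from a nondegenerate interval. The resulting $g$ is injective and satisfies $|g(n) - f(n)| \le 2^{-n}$ for every $n$. Applying injective discreteness of $p$ to $g$ yields $A \in p$ with $g(A)$ discrete in $\mathbb{R}$, hence nowhere dense by the preceding observation.

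The heart of the argument is the transfer from $g(A)$ to $f(A)$. Suppose toward contradiction that $\overline{f(A)}$ contains an open interval $(a,b)$. Since $f(A)$ is countable, one may pick $x \in (a,b) \setminus f(A)$, and by definition of closure there is a sequence $(n_k)_{k \in \omega}$ in $A$ with $f(n_k) \to x$. Because $x \notin f(A)$, no subsequence of $(n_k)$ can be bounded (otherwise $f(n_{k_j})$ would eventually equal $x$), so after extraction $n_k \to \infty$; then $|g(n_k) - f(n_k)| \le 2^{-n_k} \to 0$ forces $g(n_k) \to x$, and hence $x \in \overline{g(A)}$. Thus the dense subset $(a,b) \setminus f(A)$ of $(a,b)$ lies in $\overline{g(A)}$, so $(a,b) \subset \overline{g(A)}$, contradicting the nowhere density of $g(A)$. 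Therefore $f(A)$ is nowhere dense, and $p$ is a nowhere dense ultrafilter.

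The main obstacle is precisely this transfer step: one must arrange the perturbations $\varepsilon_n$ to tend to $0$ so that the limit behaviour of $f$ along $A$ can be read off from that of $g$, while simultaneously ensuring injectivity of $g$. The potential pitfall is that the chosen $x$ might accidentally lie in $f(A)$ and provide no information about $\overline{g(A)}$; this is circumvented by exploiting the countability of $f(A)$ to place $x$ in its complement inside $(a,b)$.
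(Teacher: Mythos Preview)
Your argument is correct and follows essentially the same route as the paper: perturb an arbitrary $f$ to a nearby injection $g$ with $|g(n)-f(n)|\le 2^{-n}$, apply the hypothesis to $g$ to obtain $A\in p$ with $g(A)$ nowhere dense, and then use $|g(n)-f(n)|\to 0$ to transfer nowhere density from $g(A)$ to $f(A)$. The paper organizes this slightly differently, passing through the intermediate notion of an \emph{injectively nowhere dense} ultrafilter and proving the formally stronger statement that the nonexistence of nowhere dense ultrafilters already rules out injectively nowhere dense ones; your version folds this in by observing at the outset that discrete subsets of $\mathbb{R}$ are nowhere dense. Your transfer step is also spelled out more carefully than the paper's (which simply asserts that $|g(n)-f(n)|\to 0$ forces $g(A)$ to be dense where $f(A)$ is), but the underlying idea is the same.
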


\begin{proof}
Let us say that an ultrafilter $p$ on $\omega$ is injectively nowhere dense if,  for any  
one-to-one function  $f\colon \omega \to \mathbb R$, there is an $A \in p$ 
such that $f(A)$ is nowhere dense. We will prove that the nonexistence of nowhere ultrafilters implies that of 
injectively nowhere dense ultrafilters. 

Suppose that there exist no nowhere dense ultrafilters but there exists and injectively nowhere dense ultrafilter 
$p$. Since $p$ is not nowhere dense, it follows that we can find a function $f\colon \omega \to 
(0,1)\cong \mathbb R$ such that, for any $A\in p$, there exists an open set 
$U\subset (0, 1)$ in which $U\cap f(A)$ is dense. Take an injective function $g\colon \omega \to \mathbb R$ such 
that $|g(n)-f(n)|<2^{-n}$ for all $n$. Since $p$ is injectively nowhere dense, there exists an $A\in p$ whose 
image $g(A)$ is nowhere dense in $\mathbb R$. On the other hand, the choice of $f$ ensures that, for some open set 
$U$ in $R$, the intersection $U\cap f(A)$ is dense in $U$. Thus, the set $U\cap f(A)$ has no isolated points. 
The condition $|g(n)-f(n)|\to 0$ implies that  $U\cap g(A)$ is dense in $U$, and hence $g(A)$ cannot be 
nowhere dense in~$\mathbb R$. 
\end{proof}

\begin{proposition}
\label{Proposition4}
\textup{(i)}\enspace 
If $p\in \beta\omega$ is $X$-discrete for some space $X$ and $q\leRK p$, then $q$ is $X$-discrete.

\textup{(ii)}\enspace 
If $p\in \beta\omega$ is finitely-to-one $X$-discrete for some space $X$ and $q\leRB p$, then $q$ is 
finitely-to-one $X$-discrete.

\textup{(iii)}\enspace 
If $p\in \beta\omega$ is $\omega^*$-discrete and $q$ is the nontrivial $p$-limit of 
some sequence $(x_n)_{n\in\omega}$ in $\beta\omega$, then there exists an $r\in \beta\omega$ such that 
$p\leRK r \leRF q$. Moreover, $q=r$-$\lim_n x_{k_n}$ for some discrete subsequence $(x_{k_n})_{n\in \omega}$ of 
$(x_n)_{n\in \omega}$ consisting of distinct points. 

\textup{(iv)}\enspace 
If $p\in \beta\omega$ is injectively $\omega^*$-discrete, then $p\leRF q$ if and only if $q$ is the $p$-limit 
of some sequence $(x_n)_{n\in\omega}$ of distinct points of~$\beta\omega$. 
\end{proposition}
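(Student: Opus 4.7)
Parts (i) and (ii) are immediate by composition. In (i), if $\beta f(p)=q$ with $f\colon\omega\to\omega$ witnesses $q\leRK p$, then for any $g\colon\omega\to X$ the composite $g\circ f$ lands in $X$, so $X$-discreteness of $p$ gives $A\in p$ with $g(f(A))$ discrete; setting $B=f(A)$, one has $f^{-1}(B)\supseteq A\in p$ so $B\in q$, and $g(B)$ is discrete. Part (ii) is verbatim, noting that a composition of finite-to-one maps is finite-to-one.

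For (iii), the plan is to collapse $(x_n)$ along the fibers of $n\mapsto x_n$ to obtain an ultrafilter $r$ that is both compatible with $p$ in the Rudin--Keisler order and realizes $q$ as an RF-limit of a discrete, distinct sequence. By Remark~\ref{Remark2}, the $\omega^*$-discreteness of $p$ upgrades to $\beta\omega$-discreteness, so applied to $\varphi_0\colon n\mapsto x_n$ it yields some $A\in p$ making $\{x_n:n\in A\}$ discrete. The nontriviality $\{n:x_n\neq q\}\in p$ together with an ultrafilter-pigeonhole argument on finite partitions (a finite range of $\varphi_0$ on a member of $p$ would put some cell $\{n:x_n=y\}$ in $p$ and force $q=y$) lets us shrink further to $A_1\in p$ on which $T:=\{x_n:n\in A_1\}$ is infinite, discrete, and disjoint from $\{q\}$. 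Pick a transversal $S\subseteq A_1$ of the partition $\{\varphi_0^{-1}(t)\cap A_1:t\in T\}$, enumerate $S$ in its natural order $S=\{s_0<s_1<\dots\}$, set $t_k:=x_{s_k}$, and define $f\colon\omega\to\omega$ by $f(n)=k$ iff $x_n=t_k$ for $n\in A_1$ (and $f\equiv 0$ off $A_1$). Then $r:=\beta f(p)$ satisfies the required RK-relation with $p$ by construction, and the map $\psi\colon k\mapsto t_k$ is an injection with discrete image $T$; the continuity identity $\beta\psi\circ\beta f=\beta(\psi\circ f)$ combined with the coincidence $\psi\circ f=\varphi_0$ on $A_1\in p$ yields $\beta\psi(r)=\beta\varphi_0(p)=q$, so $r\leRF q$. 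The ``moreover'' clause is then automatic: $(x_{s_k})_{k\in\omega}$ has strictly increasing indices, distinct values $t_k$, discrete range $T$, and $q=r\text{-}\lim_k x_{s_k}$ from the same computation.

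For (iv), the forward direction reads directly off the definition of $\leRF$: $\beta\varphi(p)=q$ with $\varphi$ injective and $\varphi(\omega)$ discrete gives $q=\pl_n\varphi(n)$ with distinct $\varphi(n)$. For the reverse direction, suppose $q=\pl_n x_n$ with distinct $x_n$ and let $\varphi(n):=x_n$, an injection $\omega\to\beta\omega$. The plan is to locate $A\in p$ with $\varphi(A)$ discrete and then transport the witness across a bijection $\omega\to A$ to get a map defined on all of $\omega$. An injective analog of the argument in Remark~\ref{Remark2} supplies $A$: modify $\varphi$ on $\varphi^{-1}(\omega)$ by an injection into a fixed countable discrete subset of $\omega^*$ disjoint from $\varphi(\varphi^{-1}(\omega^*))$, apply injective $\omega^*$-discreteness of $p$ to the resulting injection $\omega\to\omega^*$, and intersect back with $\varphi^{-1}(\omega^*)$ (taking $A=\varphi^{-1}(\omega)$ outright in the degenerate case where this lies in $p$). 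Fix a bijection $\sigma\colon\omega\to A$ and set $\tilde p:=\{S\subseteq\omega:\sigma(S)\in p\}$; since $\sigma\colon\omega\to\omega$ is injective and $\beta\sigma(\tilde p)=p$, Remark~\ref{Remark_add} gives $\tilde p\equiv p$. The map $\varphi\circ\sigma\colon\omega\to\beta\omega$ is injective with discrete image $\varphi(A)$, and by continuity $\beta(\varphi\circ\sigma)(\tilde p)=\beta\varphi(\beta\sigma(\tilde p))=\beta\varphi(p)=q$, whence $\tilde p\leRF q$; invariance of $\leRF$ under equivalence of ultrafilters then yields $p\leRF q$.

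The main obstacle throughout is bridging the ``there exists $A\in p$ on which some property holds'' form of the hypotheses with the globally-on-$\omega$ requirements built into the definitions of $\leRF$ and of ultrafilter equivalence. In (iii) this is managed by defining $f$ arbitrarily off $A_1$ (which cannot affect $\beta f(p)$) and by enumerating $T$ compatibly with the natural order of a transversal so that the ``moreover'' subsequence comes with strictly increasing indices; in (iv) it is handled by transporting $p$ across a bijection $\sigma\colon\omega\to A$ to an equivalent ultrafilter $\tilde p$ defined on all of $\omega$.
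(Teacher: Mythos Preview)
Your argument follows the paper's almost exactly: (i)--(ii) by composition; (iii) by upgrading to $\beta\omega$-discreteness via Remark~\ref{Remark2}, passing to $A\in p$ on which the range is discrete, and collapsing fibers to obtain $r$; and (iv) by observing that with a one-to-one sequence the collapse is injective on a member of $p$, so that $r$ is equivalent to $p$ via Remark~\ref{Remark_add} (your bijection $\sigma\colon\omega\to A$ is just an explicit version of this equivalence). One point should be stated explicitly rather than hidden behind ``the required RK-relation'': your construction, like the paper's, gives $r=\beta f(p)$ and hence $r\leRK p$, not $p\leRK r$ as printed in the statement; this is in fact the direction needed in the application in Proposition~\ref{proposition-Kunen2}, so the printed inequality is evidently a misprint, and you should write $r\leRK p$ outright rather than leave the direction ambiguous.
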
 

\begin{proof}
The first two assertions are obvious, as well as the `only if' part of the fourth one. 

Let us prove (iii). Suppose that $(x_n)_{n\in 
\omega}$ is any sequence in $\beta\omega$ and $q$ is the nontrivial $p$-limit of $(x_n)_{n\in \omega}$. If the 
ultrafilter $p$ is principal, then there is nothing to prove, so we will assume that $p\in \omega^*$. Recall that, 
by Remark~\ref{Remark2}, the $\omega^*$-discreteness of the ultrafilter $p$ implies its 
$\beta\omega$-discreteness. Let $A\in p$ be such that the set $\{x_n: n\in A\}$ is discrete. This set is infinite, 
because $q$ is the nontrivial $p$-limit of $(x_n)_{n\in \omega}$. Let $(x_{k_n})_{n\in \omega}$ be a subsequence 
of $(x_n)_{n\in \omega}$ consisting of distinct points and such that $\{x_n: n\in A\}=\{x_{k_n}: n\in \omega\}$. 
Note that $(x_{k_n})_{n\in \omega}$ is discrete. 

Take any function $\pi\colon \omega\to \omega$ such that, for every $i\in A$, $\pi(i)=n$ if and only if 
$x_i=x_{k_n}$. The ultrafilter $r=\beta\pi(p)$ is a $\leRK$-successor of $p$, and for each neighborhood $U$ of 
$q$ in $\beta\omega$, we have $\{n: x_{k_n}\in U\}=\pi(\{n: x_n\in U\})\in r$. Therefore, $q= r$-$\lim_n x_{k_n}$. 
By Remark~\ref{Remark1} we have $r\leRF q $.

The proof of the `if' part of  assertion (iv) is similar, the only difference being that we must consider a 
one-to-one sequence $(x_n)_{n\in\omega}$; then the restriction of $\pi$ to $A$ is one-to-one, so that $p$ is 
equivalent to $r$ (see the argument in the proof of Theorem~\ref{Theorem2}). 
\end{proof}

\section{Products of Ultrafilters}

Recall that the \emph{tensor}, or \emph{Fubini}, \emph{product} $p \otimes q$ of ultrafilters 
$p$ and $q$ on $\omega$ is the ultrafilter on $\omega\times \omega$ defined by 
$$
p \otimes q=\{A \subseteq \omega \times \omega:
\{n:\{m:(n, m) \in A\} \in q\} \in p\} 
$$ 
(see, e.g., \cite{HS}). It is generated by a base consisting of sets of the form 
$$
\bigcup_{n\in P}\{n\}\times Q_n, \qquad \text{where $P\in p$ and $Q_n\in q$ for each $n\in P$}.
$$ 

A generalization of the Fubini product of two ultrafilters is the \emph{Fubini sum} $\sum_p(q_n)$ of a sequence 
of ultrafilters $q_n\in \beta\omega$ over $p\in \beta \omega$, which is generated by sets of the form 
$$
\bigcup_{n\in P}\{n\}\times Q_n, \qquad \text{where $P\in p$ and $Q_n\in q_n$ for each $n\in P$}.
$$

Considering products of ultrafilters, we assume that $\omega\times \omega$ is endowed with the discrete topology, 
so that the space $\beta(\omega\times \omega)$ consists of ultrafilters on $\omega\times \omega$  and its 
topology is generated by the base of sets of the form $\overline A=\{p\in \beta (\omega\times \omega): A\in p\}$. 
Thus, the spaces $\beta \omega$ and $\beta (\omega\times \omega)$ are homeomorphic and have the same description, 
and all notions and constructions and related to $\beta\omega$ carry over to $\beta(\omega\times \omega)$ without 
any changes. In what follows, we identify $\omega$ with $\omega\times \omega$ and $\beta \omega$ with 
$\beta(\omega\times \omega)$ when appropriate. 

In \cite{vanMill16} van~Mill defined various 
topological types of ultrafilters in $\omega^*$, one of which was 
$$ 
A_1 = \{x\in\omega^* : 
\exists\,\text{countable discrete $D\subset\omega^*\setminus\{x\}$ with $x\in\overline D$}\}. 
$$ 
Thus, ultrafilters 
of van~Mill's type $A_1$ are precisely those $p\in \omega^*$ which are not discretely weak $P$-points.

\begin{proposition}
\label{Proposition5}
The tensor product $p\otimes q$ of any ultrafilters $p,q\in \omega^*$ belongs to $A_1$. 
\end{proposition}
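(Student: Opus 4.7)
The plan is to construct, for any $p, q \in \omega^*$, an explicit countable discrete subset $D \subset \omega^* \setminus \{p \otimes q\}$ with $p \otimes q \in \overline{D}$, which by the discussion preceding the proposition is exactly what it means to lie in $A_1$.

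The natural candidate for $D$ uses the ``columns'' of $\omega \times \omega$. For each $n \in \omega$, I would define the ultrafilter
\[
q_n = \{A \subseteq \omega \times \omega : \{m : (n,m) \in A\} \in q\}
\]
on $\omega \times \omega$, i.e., the copy of $q$ concentrated on the column $\{n\} \times \omega$. Since $q$ is nonprincipal, each $q_n$ is a nonprincipal ultrafilter on $\omega \times \omega$, hence a point of $(\omega \times \omega)^* \cong \omega^*$. Set $D = \{q_n : n \in \omega\}$.

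The key verifications are three. First, $D$ is discrete, because the clopen sets $\overline{\{n\} \times \omega}$ in $\beta(\omega \times \omega)$ separate the $q_n$'s: $q_n \in \overline{\{n\} \times \omega}$ while $q_m \notin \overline{\{n\} \times \omega}$ for $m \ne n$ (so the $q_n$ are pairwise distinct as well). Second, $p \otimes q \in \overline{D}$: every basic neighborhood of $p \otimes q$ has the form $\overline{A}$ for some $A \in p \otimes q$, which by the definition of the Fubini product means $\{n : \{m : (n,m) \in A\} \in q\} \in p$, and for every such $n$ we have $A \in q_n$; in fact this shows $p \otimes q = \pl_n q_n$. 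Third, $p \otimes q \notin D$: since $p$ is nonprincipal, $(\omega \setminus \{n\}) \times \omega \in p \otimes q$ for every $n$, whereas this set is not in $q_n$ (which contains $\{n\} \times \omega$).

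I do not anticipate a serious obstacle here; the argument is essentially a direct unpacking of the definition of $p \otimes q$, and the only mildly delicate point is to verify that $p \otimes q$ itself is not one of the $q_n$, which is handled by exploiting the nonprincipality of $p$.
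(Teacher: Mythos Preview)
Your proof is correct and follows essentially the same approach as the paper: your ultrafilters $q_n$ are exactly the paper's $p(n)\otimes q$ (where $p(n)$ is the principal ultrafilter at $n$), and both arguments exhibit $D=\{q_n:n\in\omega\}$ as the discrete set witnessing $p\otimes q\in A_1$, with $p\otimes q=\pl_n q_n$. If anything, your write-up is slightly more thorough, since you explicitly verify $p\otimes q\notin D$ using the nonprincipality of $p$, a point the paper leaves implicit.
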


\begin{proof}
Recall that, for $n\in \omega$, $p(n)$ denotes the principal ultrafilter on $\omega$ generated by $\{n\}$. 
Let $r= \pl_n (p (n) \otimes q)$. Any neighborhood of $r$ in $\beta(\omega\times \omega)$ contains a 
neighborhood of the form $\overline A$ for $A\in r$. Since $r= \pl_n (p (n) \otimes q)$, for each $A\in r$, we 
have $B = \{n : A\in p(n) \otimes q\}\in p$. Hence, for every $n\in \omega$, there exists a 
 $B_n\in q$ such that $\{n\}\times B_n\subset A$. Thus, each $A\in r$ 
contains $\bigcup\limits_{n\in B}\{n\}\times B_n$ for some $B_n\in q$. It follows from the definition of the base 
of a product of ultrafilters that $r = p \otimes q$. 
Note that the set $D = \{p(n) \otimes q : n\in\omega\}$ is countable and discrete (disjoint neighborhoods of its 
elements are $\overline{\{n\}\times B_n}$) and $p\otimes 
q\in\overline{D}$, because any neighborhood of $p\otimes q$ contains a neighborhood of the form 
$\overline{A}$ for $A\in p\otimes q$, any $A$ contains $B=\{n\}\times B_n$ for some $n\in \omega$ and $B_n\in q$, 
and $\overline B$ is a neighborhood of $p(n)\otimes q$ for any such $\overline B$. 
\end{proof} 

\begin{corollary}
A tensor product of two nonprincipal ultrafilters is never a discretely weak $P$-point. Thus, any such product is 
$\leRF$-minimal.
\end{corollary}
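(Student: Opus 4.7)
The plan is to obtain both assertions as a short chain of citations, with no new construction required.

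For the first assertion, I would read off the content of Proposition~5 directly. That proposition exhibits the explicit countable set $D=\{p(n)\otimes q:n\in\omega\}$, verifies its discreteness via the pairwise disjoint basic clopen neighborhoods $\overline{\{n\}\times B_n}$, and establishes that $p\otimes q\in\overline D$. Two small checks round things out: $q\in\omega^*$ forces each $p(n)\otimes q\in\omega^*$ (so $D\subset\omega^*$), and $p\in\omega^*$ forces $p(n)\otimes q\ne p\otimes q$, since $\{n\}\times\omega$ belongs to the former but not to the latter (the projection to the first coordinate sends $p(n)\otimes q$ to $p(n)$ and $p\otimes q$ to $p$). Consequently $D$ is a countable discrete subset of $\omega^*\setminus\{p\otimes q\}$ whose closure contains $p\otimes q$, which is exactly the definition of $p\otimes q$ failing to be a discretely weak $P$-point (equivalently, $p\otimes q\in A_1$).

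For the second assertion, I would invoke Theorem~2(iii), which identifies the $\leRF$-minimal elements of $\omega^*$ with the discretely weak $P$-points. Combined with the first assertion, the conclusion of the corollary is immediate. A concrete Rudin--Frol\'\i k predecessor is already at hand: by Proposition~1 applied to the discrete sequence of distinct points $(p(n)\otimes q)_{n\in\omega}$, whose $p$-limit is $p\otimes q$, one obtains $p\leRF p\otimes q$, and $p$ is not $\leRF$-equivalent to $p\otimes q$ (the predecessor is strict because $p\otimes q$ dominates the ``horizontal'' coordinate projection $p$ via a non-injective map on any element of $p\otimes q$).

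There is no substantive obstacle here: the corollary is essentially a repackaging of Proposition~5 through the lens of Theorem~2(iii). The only care needed is the small book-keeping that the set $D$ built in Proposition~5 genuinely consists of nonprincipal ultrafilters distinct from $p\otimes q$, which is automatic from the nonprincipality of $p$ and~$q$.
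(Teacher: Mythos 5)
Your argument is correct and takes the route the paper intends: the first sentence is exactly Proposition~\ref{Proposition5} (whose proof already exhibits the countable discrete set $D=\{p(n)\otimes q:n\in\omega\}$ with $p\otimes q\in\overline D$), supplemented by the bookkeeping you rightly supply that $D\subset\omega^*\setminus\{p\otimes q\}$, and the second sentence is then read off from Theorem~\ref{Theorem2}\,(iii). One point needs attention, though: since Theorem~\ref{Theorem2}\,(iii) identifies the $\leRF$-minimal ultrafilters with the discretely weak $P$-points, what actually follows from the first assertion is that $p\otimes q$ is \emph{not} $\leRF$-minimal; the corollary's second sentence as printed is evidently missing a ``not,'' and your own closing construction of the predecessor $p\leRF p\otimes q$ via Proposition~\ref{Proposition1} shows that non-minimality is what you are in fact proving, so you should say so explicitly rather than asserting that the printed conclusion is ``immediate.'' Finally, your parenthetical reason why $p$ is not equivalent to $p\otimes q$ (non-injectivity of the coordinate projection on elements of $p\otimes q$) does not by itself rule out equivalence, since by Remark~\ref{Remark_add} equivalence could be witnessed by some other map; this extra claim is unnecessary anyway, because non-minimality already follows from Theorem~\ref{Theorem2}\,(iii) without producing an explicit strict predecessor.
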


\begin{proposition}
For any compact space $X$ and any $X$-discrete (finitely-to-one 
$X$-discrete, injectively $X$-discrete) ultrafilters $p,q_n\in \omega^*$, $n\in \omega$, the Fubini sum $\sum_p 
(q_n)$ is an $X$-discrete (finitely-to-one $X$-discrete, injectively $X$-discrete) ultrafilter 
on $\omega\times \omega$. 
\end{proposition}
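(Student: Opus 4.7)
The plan is to produce, given any $f\colon \omega\times\omega\to X$ of the prescribed type (arbitrary, finite-to-one, or injective), an element $A=\bigcup_{n\in P}\{n\}\times C_n\in \sum_p(q_n)$ with $P\in p$ and $C_n\in q_n$ such that $f(A)$ is discrete in $X$. First, I introduce the row maps $f_n\colon \omega\to X$, $f_n(m)=f(n,m)$, which inherit the type of $f$. Applying $X$-discreteness of $q_n$ in the prescribed sense to $f_n$ yields $B_n\in q_n$ with $f_n(B_n)$ discrete in $X$. Compactness of $X$ gives the $q_n$-limit $y_n:=\beta f_n(q_n)\in X$; set $g\colon \omega\to X$ by $g(n)=y_n$.

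Next, I would apply $X$-discreteness of $p$ to $g$ to obtain $P\in p$ such that $\{y_n:n\in P\}$ is discrete in $X$. The key auxiliary tool is the standard observation that, in a regular Hausdorff space, every countable discrete subspace admits pairwise disjoint open neighborhoods whose closures meet the subspace only in the respective points---proved by straightforward induction using regularity. Since compact Hausdorff $X$ is regular, this applies: pick pairwise disjoint open $V_n\ni y_n$ for $n\in P$ with $\overline{V_n}\cap\{y_m:m\in P\}=\{y_n\}$, and put $C_n=B_n\cap f_n^{-1}(V_n)$; because $V_n$ is an open neighborhood of $y_n=\beta f_n(q_n)$, we have $f_n^{-1}(V_n)\in q_n$, and hence $C_n\in q_n$.

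Then $A=\bigcup_{n\in P}\{n\}\times C_n\in \sum_p(q_n)$ and $f_n(C_n)\subset V_n$ for each $n\in P$. To verify that $f(A)=\bigcup_n f_n(C_n)$ is discrete, fix $z\in f(A)$: pairwise disjointness of the $V_n$'s places $z$ in a unique $V_{n_0}$, so $z\in f_{n_0}(C_{n_0})\subset f_{n_0}(B_{n_0})$, which is discrete. Choosing an open $W\subset V_{n_0}$ with $W\cap f_{n_0}(B_{n_0})=\{z\}$, one obtains $W\cap f(A)=\{z\}$, since $W$ is disjoint from each $V_n$ with $n\ne n_0$ (and thus from $f_n(C_n)$).

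The principal obstacle is the finite-to-one and injective variants: the anchor $g(n)=y_n$ need not inherit finite-to-oneness or injectivity from $f$, so the corresponding $X$-discreteness of $p$ cannot be applied to $g$ directly. I would handle this by case analysis. If some $g^{-1}(y)\in p$, one restricts $P$ to $g^{-1}(y)$, reducing to the single-anchor case $\{y_n:n\in P\}=\{y\}$; using the type of $f$ one shows that $\{m:f_n(m)=y\}$ is finite (hence not in the nonprincipal $q_n$), so one may arrange $y\notin f_n(B_n)$, and the finite-to-one/injective structure of $f$ is then exploited to isolate each point of $\bigcup f_n(C_n)$. Otherwise, one replaces $g$ by the auxiliary $\tilde g(n)=f(n,m_n)$ for a fixed $m_n\in B_n$, which inherits finite-to-oneness (respectively injectivity) from $f$; applying the prescribed $X$-discreteness of $p$ produces a discrete set of auxiliary anchors $\tilde g(P)$, and separating neighborhoods $V_n$ must then be constructed so as to both isolate the anchors $\tilde g(n)$ and remain neighborhoods of the $y_n$'s, so that the pull-back $C_n=B_n\cap f_n^{-1}(V_n)$ still lies in $q_n$. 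This reconciliation is the technically delicate step.
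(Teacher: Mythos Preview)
For the unqualified $X$-discrete case your argument coincides with the paper's: row maps $f_n$, row-wise discreteness via $q_n$, anchors $y_n=q_n\text{-}\lim_m f_n(m)$ by compactness, $X$-discreteness of $p$ applied to $(y_n)$, strong discreteness of the countable discrete anchor set, and the intersection $C_n=B_n\cap f_n^{-1}(V_n)$.

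You are right to flag the finitely-to-one and injectively $X$-discrete variants as problematic: the anchor map $n\mapsto y_n$ need not inherit the type of $f$, so the hypothesis on $p$ cannot be invoked directly. The paper's own proof simply writes ``the $X$-discreteness of $p$ implies\dots'' at this step and does not address the difficulty; in the parenthetical cases the paper's argument has exactly the gap you identify and offers no repair.

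Your proposed repair, however, is not complete either. In your Case~1 (some fiber $g^{-1}(y)\in p$) all anchors coincide, so you lose the disjoint neighborhoods that were doing the work; the assertion that ``the finite-to-one/injective structure of $f$ is then exploited to isolate each point of $\bigcup f_n(C_n)$'' is not substantiated---even for injective $f$, disjointness of the row images does not make their union discrete. In your Case~2 the auxiliary $\tilde g(n)=f(n,m_n)$ has the right type, but discreteness of $\{\tilde g(n):n\in P\}$ gives no control over the genuine anchors $y_n$: you need pairwise disjoint open $V_n\ni y_n$ (so that $f_n^{-1}(V_n)\in q_n$), yet the case split only rules out a full $p$-fiber of $g$, not accumulation of the $y_n$, so such $V_n$ need not exist. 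The ``technically delicate step'' is therefore a genuine gap, not merely a detail.
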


\begin{proof} 
Take any (any finite-to-one, any one-to-one) sequence $(x_{(n,m)})\subset X$. We must show that there 
exists an $A\in \sum_p (q_n)$ for which the set $\{x_{(n,m)}: (n,m)\in A\}$ is discrete. For each $k\in\omega$, 
consider the sequence $(x_{(k,m)})_{m\in\omega}$. Since $q_k$ is discrete, it follows that, for each $k\in 
\omega$, there exists a $B_k\in q_k$ for which the set $\{x_{(k,m)} : m\in B_k\}$ is discrete. Recall that, in a 
compact space, any sequence has a limit along any ultrafilter. Let $x_k=q_k$-$\lim_m x_{(k,m)}$ for $k\in 
\omega$. The $X$-discreteness of $p$ implies the existence of a $C\in p$ for which the set $\{x_k : k\in C\}$ is 
discrete. Since $\{x_k : k\in C\}$ is countable, it is strongly discrete, that is, there exists a disjoint system 
of neighborhoods $U_k$ of the points $x_k$ in $X$ (such a system is easy to construct by induction). For each 
neighborhood $U_k$, we have $\tilde B_k=\{m: x_{(k,m)} \in U_k\}\in q_k$, because $x_k=q_k$-$\lim_m x_{(k,m)}$. 
Thus, $A = \bigcup_{k\in C}\bigl(\{k\}\times (B_k\cap \tilde{B}_k)\bigr)\in \sum_p (q_k)$. Clearly, the set 
$\{x_{(n,m)}: (n,m)\in A\}$ is discrete. 
\end{proof} 

\begin{corollary}
For any compact space $X$ and any $X$-discrete (finitely-to-one 
$X$-discrete, injectively $X$-discrete) ultrafilters $p,q\in\omega^*$, the tensor product $p 
\otimes q$ is an $X$-discrete (finitely-to-one $X$-discrete, injectively $X$-discrete) ultrafilter 
on $\omega\times \omega$. 
\end{corollary}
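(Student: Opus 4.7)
The plan is to obtain this as an immediate specialization of the preceding proposition. First I would observe that the tensor product $p\otimes q$ is a particular case of a Fubini sum: if one defines $q_n=q$ for every $n\in\omega$, then the generating base of $\sum_p(q_n)$, consisting of sets
$$
\bigcup_{n\in P}\{n\}\times Q_n\quad\text{with $P\in p$ and $Q_n\in q_n$,}
$$
becomes exactly the generating base of $p\otimes q$ given at the start of Section~4. Hence $\sum_p(q_n)=p\otimes q$ as ultrafilters on $\omega\times\omega$.

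Next I would feed this identification through each of the three clauses of the proposition. If $p$ and $q$ are both $X$-discrete (respectively, finitely-to-one $X$-discrete, injectively $X$-discrete), then the constant sequence $(q_n)_{n\in\omega}$ automatically satisfies the hypotheses of the proposition in the corresponding variant, and its conclusion gives precisely that $p\otimes q$ is $X$-discrete (respectively, finitely-to-one, injectively) as required. Since all three cases are handled uniformly by the same specialization, there is essentially no obstacle; the only thing to verify is the base identification above, which is a routine unwinding of the definitions in the opening of Section~4.
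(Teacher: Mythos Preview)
Your proposal is correct and is exactly the intended derivation: the paper states this result as a corollary without proof precisely because it follows by taking the constant sequence $q_n=q$ in the preceding proposition on Fubini sums, via the base identification you spell out.
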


Below we consider the set $\mathbb N$ of positive integers instead of $\omega$ solely in order that multiplication 
be a finite-to-one map. On the space $\beta \mathbb N$ the semigroup operations $\cdot$ and $+$ are defined (see 
\cite{HS}). Given two ultrafilters $p$ and $q$ on $\mathbb N$, their semigroup product $p\cdot q$ and sum $p+q$ 
are generated, respectively, by the sets 
$$ 
\bigcup_{n\in P}(n \cdot Q_n) \;\text{ and }\; \bigcup_{n\in P}(n + 
Q_n), \quad \text{where $P\in p$ and $Q_n\in q$ for each $n\in P$} 
$$ 
(here $\cdot$ and $+$ denote the usual 
multiplication and addition in $\mathbb N$). The maps 
$$ 
\cdot\colon \mathbb N\times \mathbb N\to \mathbb N,\; (m,n)\mapsto m\cdot n, \quad \text{and} \quad 
+\colon \mathbb N\times \mathbb N\to \mathbb N,\; (m,n)\mapsto m+n, 
$$ 
are finite-to-one. Clearly, $\cdot(p\otimes q)=p\cdot q$ and $+(p\otimes q)=p\cdot q$ for any $p,q\in \beta\mathbb 
N$. We obtain the following corollary.

\begin{corollary}
For any compact space $X$ and any $X$-discrete (finitely-to-one 
$X$-discrete) ultrafilters $p,q\in\mathbb N^*$, the ultrafilters $p\cdot q$ and  
$p+q$ are $X$-discrete (finitely-to-one $X$-discrete). 
\end{corollary}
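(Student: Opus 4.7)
The plan is to reduce this corollary directly to the previous one by exploiting the finite-to-one maps $\cdot$ and $+$ that the authors singled out immediately before the statement, together with the monotonicity of $X$-discreteness under the Rudin--Keisler and Rudin--Blass orders established in Proposition~\ref{Proposition4}(i)--(ii).

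First I would observe that the previous corollary already gives us that $p\otimes q$ is $X$-discrete (respectively, finitely-to-one $X$-discrete) as an ultrafilter on $\omega\times\omega$, where we identify $\omega\times\omega$ with $\mathbb N\times\mathbb N$ in the obvious way. Next, the authors have explicitly noted that $\beta(\cdot)(p\otimes q) = p\cdot q$ and $\beta(+)(p\otimes q) = p+q$, and that the underlying maps $\cdot,+\colon \mathbb N\times\mathbb N\to\mathbb N$ are finite-to-one. This means, directly from the definitions of the Rudin--Blass and Rudin--Keisler orders,
\[
p\cdot q \leRB p\otimes q \quad\text{and}\quad p+q \leRB p\otimes q,
\]
and in particular $p\cdot q \leRK p\otimes q$ and $p+q \leRK p\otimes q$.

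From here the two cases are immediate. For the finitely-to-one $X$-discrete case, Proposition~\ref{Proposition4}(ii) applied to $p\otimes q$ yields that both $p\cdot q$ and $p+q$ are finitely-to-one $X$-discrete. For the plain $X$-discrete case, Proposition~\ref{Proposition4}(i) applied to $p\otimes q$ (using that $\leRB$ implies $\leRK$) yields that both $p\cdot q$ and $p+q$ are $X$-discrete.

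There is no genuine obstacle here — the statement is a clean combination of (i) the preceding corollary on tensor products, (ii) the fact that $\cdot$ and $+$ are finite-to-one on $\mathbb N$, and (iii) the Rudin--Blass/Rudin--Keisler monotonicity of $X$-discreteness. The only thing to be slightly careful about is the implicit identification of $\omega$ with $\omega\times\omega$ (and hence $\beta\omega$ with $\beta(\omega\times\omega)$) when invoking Proposition~\ref{Proposition4}; this is exactly the identification the authors authorized in the paragraph preceding Proposition~\ref{Proposition5}, so no further justification is required.
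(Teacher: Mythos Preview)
Your proof is correct and matches the paper's intended argument: the corollary is stated without proof precisely because it follows immediately from the preceding corollary on $p\otimes q$, the observation that $\cdot$ and $+$ are finite-to-one with $\cdot(p\otimes q)=p\cdot q$ and $+(p\otimes q)=p+q$, and Proposition~\ref{Proposition4}(i)--(ii). You have spelled out exactly this chain of implications.
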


\begin{corollary}
The sets of discrete, finitely-to-one discrete, $\omega^*$-discrete, and finitely-to-one $\omega^*$-discrete 
ultrafilters on $\mathbb N$ are subsemigroups in the semigroups $(\beta\mathbb N,\cdot)$ and $(\beta\mathbb 
N,+)$.
\end{corollary}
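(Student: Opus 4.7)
The plan is to deduce the statement from the preceding corollary by choosing an appropriate compact ambient space $X$ in each of the four cases.

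For the $\omega^*$-discrete and finitely-to-one $\omega^*$-discrete classes, the reduction is immediate: since $\omega^*$ is itself compact and Hausdorff, I would apply the preceding corollary with $X=\omega^*$ to obtain closure under both $\cdot$ and $+$ at once.

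For the discrete and finitely-to-one discrete classes, the obstacle is that $\mathbb R$ is not compact, so I cannot plug $X=\mathbb R$ directly into the preceding corollary. My plan is to embed $\mathbb R$ as an open subspace of a compact Hausdorff space $X$ --- for definiteness, $X=[-1,1]$ with $\mathbb R$ identified with $(-1,1)$ via $x\mapsto x/(1+|x|)$. By the earlier proposition asserting that every discrete (resp.\ finitely-to-one discrete) ultrafilter is $X$-discrete (resp.\ finitely-to-one $X$-discrete) for every space, the ultrafilters $p$ and $q$ are $X$-discrete for this compact $X$, and the preceding corollary then gives that $p\cdot q$ and $p+q$ are $X$-discrete as well. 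Translating this back to ordinary discreteness is routine: the topology of $\mathbb R$ coincides with its subspace topology in $X$, so a subset of $\mathbb R$ is discrete in $\mathbb R$ exactly when it is discrete in $X$; hence for every $f\colon \mathbb N\to\mathbb R$, viewing $f$ as a map into $X$ and extracting $A$ from the relevant ultrafilter yields $f(A)$ discrete in $\mathbb R$.

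To obtain the subsemigroup conclusion over all of $\beta\mathbb N$, and not only over $\mathbb N^*$, I would separately handle the cases in which one factor is principal. A principal ultrafilter is trivially in all four classes (for any $f\colon\mathbb N\to X$ the singleton in the ultrafilter is mapped to a singleton, which is discrete), and for $p(m)$ the ultrafilters $p(m)\cdot q$ and $p(m)+q$ are the images of $q$ under the finite-to-one maps $n\mapsto mn$ and $n\mapsto m+n$, hence $\leRB$-below $q$, so they remain in the appropriate class by Proposition~\ref{Proposition4}(i),(ii). The only substantive step in the whole argument is the compactification trick of the previous paragraph, and it presents no real obstacle.
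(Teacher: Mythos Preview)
Your proposal is correct and essentially matches the paper's (implicit) approach: the corollary is stated there without proof, as an immediate consequence of the preceding corollary on $p\cdot q$ and $p+q$ for $X$-discrete ultrafilters with $X$ compact. You simply make explicit two details the paper leaves to the reader---the compactification of $\mathbb R$ (via $[-1,1]$, or equivalently via $2^\omega\subset\mathbb R$ as in the proof of the earlier proposition) needed because the preceding corollary assumes $X$ compact, and the separate treatment of principal factors to obtain a subsemigroup of all of $\beta\mathbb N$ rather than just $\mathbb N^*$.
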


\begin{corollary}
If there exist discrete ultrafilters, then there exist discrete ultrafilters which are not discretely weak 
$P$-points (and hence are not $\leRF$-minimal). 
\end{corollary}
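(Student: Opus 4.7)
The plan is to take any discrete ultrafilter $p$ on $\omega$ (assumed to exist by hypothesis) and show that its tensor square $p\otimes p$, viewed as an ultrafilter on $\omega$ via the standing identification $\omega\cong\omega\times\omega$, has the desired properties. The two preceding corollaries already supply the needed ingredients: the compact-space tensor-product corollary tells us that $X$-discreteness is inherited by $p\otimes p$ whenever $X$ is compact, and Proposition~5 tells us that $p\otimes p$ lies in van~Mill's class $A_1$ and hence is not a discretely weak $P$-point. All that remains is to glue these facts together.

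First I would verify that $p\otimes p$ is discrete in the $\mathbb R$-based sense of Baumgartner's definition. Because $p$ is discrete, the proposition just after the definition of $X$-discreteness gives that $p$ is $X$-discrete for every space $X$; in particular $p$ is $[0,1]$-discrete. Since $[0,1]$ is compact, the tensor-product corollary applies and yields that $p\otimes p$ is $[0,1]$-discrete as well. To convert this back to $\mathbb R$-discreteness, fix a homeomorphism $h\colon\mathbb R\to(0,1)\subset[0,1]$; then for any $f\colon\omega\to\mathbb R$, an $A\in p\otimes p$ for which $(h\circ f)(A)$ is discrete in $[0,1]$ also gives $f(A)=h^{-1}((h\circ f)(A))$ discrete in $\mathbb R$. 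Thus $p\otimes p$ is a discrete ultrafilter.

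Second, Proposition~5 applied to $q=p$ gives $p\otimes p\in A_1$, which is precisely the statement that $p\otimes p$ is not a discretely weak $P$-point in $\omega^*$. Combining this with Theorem~2(iii), which characterises discretely weak $P$-points as exactly the $\leRF$-minimal elements of $\omega^*$, we conclude that $p\otimes p$ is a discrete ultrafilter that is not a discretely weak $P$-point (and hence, by that characterisation, is not $\leRF$-minimal).

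There is no genuine obstacle in the argument; both key ingredients are already established. The only mildly technical point is the bookkeeping between the compact-space hypothesis of the tensor-product corollary and the non-compact space $\mathbb R$ used in the definition of ``discrete'', which is handled by the pull-back along $h$ above.
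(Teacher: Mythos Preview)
Your proof is correct and matches the paper's implicit argument (the corollary is stated without proof): form $p\otimes p$, invoke the preceding tensor-product corollary for discreteness and Proposition~5 for membership in $A_1$; your detour through the compact space $[0,1]$ to reconcile that corollary's compact-$X$ hypothesis with the $\mathbb R$-based definition of ``discrete'' is a technicality the paper leaves to the reader. You also correctly observe that, by Theorem~2(iii), the parenthetical ``and hence are $\leRF$-minimal'' should read ``and hence are \emph{not} $\leRF$-minimal''---the same slip already appears in the corollary immediately following Proposition~5.
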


\section{Classes of Spaces Between $F$-Spaces and $\beta\omega$-Spaces} 

In what follows, we consider extremally disconnected, $F$-, and $\beta\omega$-spaces. A space $X$ 
is \emph{extremally disconnected} if any disjoint open sets in $X$ have disjoint closures (or, equivalently, 
if the closure of any open set is open), and $X$ is called an \emph{$F$-space} if any disjoint cozero sets in $X$ 
are completely (= functionally) separated. Clearly, any extremally disconnected space is an $F$-space. The basic 
topological properties of extremally disconnected spaces and $F$-spaces can be found in the fundamental book 
\cite{GJ} by Gillman and Jerison. The class of $\beta\omega$-spaces was introduced by van~Douwen \cite{vanDouwen} 
as a generalization of the class of $F$-spaces. A space $X$ is called a \emph{$\beta\omega$-space} if, 
whenever $D$ is a countable discrete subset of $X$ with compact closure $\overline D$ in $X$, its closure 
$\overline D$ is the Stone--\v Cech compactification~$\beta D$ (or, equivalently, $\overline D$ is homeomorphic 
to $\beta\omega$). 

It is known that any countable separated sets in an extremally disconnected space are functionally separated 
\cite[1.6]{Frolik2}. It follows that any countable subspace of an extremally disconnected space is extremally 
disconnected. Moreover, any countable subspace of an $F$-space is extremally disconnected as well. 
Indeed, according to \cite[9H.1]{GJ}, any countable set in an $F$-space is $C^*$-embedded, and it is easy to see 
that the property of being an $F$-space is inherited by $C^*$-embedded subspaces. Thus, any countable subspace of 
an $F$-space is an $F$-space. It remains to note that all countable spaces are perfectly normal, so that any open 
set in such a space is a cozero set. 

Note also that all countable subspaces of a space $X$ are extremally disconnected if and only if any 
countable separated subsets of $X$ have disjoint closures. Indeed, suppose that all countable subspaces  of 
$X$ are extremally disconnected and let $A$ and $B$ be countable separated subsets of $X$. Then $A\cup B$ is 
extremally disconnected, and $A$ and $B$ are separated in $A\cup B$. According to \cite[Proposition~1.9]{Frolik2}, 
we have $\overline A\cap \overline B=\varnothing$. Conversely, suppose that countable separated subsets of $X$ 
have disjoint closures and let $Y$ be a countable subspace of $X$. Obviously, any disjoint open subsets of $Y$ are 
separated in $Y$ and hence in $X$. Therefore, they have disjoint closures in $X$ and hence in $Y$. 

These observations suggests a number of natural generalizations of the class of $F$-spaces. 

\begin{definition}
We say that a topological space $X$ is 
\begin{itemize}
\item
an \emph{$\cR_1$-space} if any countable subset of $X$ is extremally disconnected, 
i.e., any two separated countable subsets of $X$ have disjoint closures; 
\item
an \emph{$\cR_2$-space} if any two separated countable subsets of $X$ one of which is discrete have disjoint 
closures; 
\item
an \emph{$\cR_3$-space} if any two separated countable discrete subsets of $X$ have disjoint 
closures.
\end{itemize} 
\end{definition}

Importantly, the classes of $\cR_i$-spaces are hereditary, unlike those of extremally disconnected 
and $F$-spaces.

It is clear that 
$$
\text{$F$-spaces}\subset\text{$\cR_1$-spaces}\subset \text{$\cR_2$-spaces}\subset \text{$\cR_3$-spaces}.
$$ 
However, the reverse 
inclusions do not hold. Examples distinguishing between these classes are the quotient spaces $(\beta\omega\oplus 
\beta\omega)/\{p, q\}$, where $p$ belongs to the first copy of $\beta\omega$ and $q$, to the second one. An 
example of an $\mathscr R_1$-space which is not an $F$-space is obtained when both $p$ and $q$ are weak $P$-points 
not being $P$-points. An example of an $\mathscr R_2$-space which is not an $\mathscr R_1$-space is obtained when 
both $p$ and $q$ are discretely weak $P$-points not being weak $P$-points. Finally, an example of an $\mathscr 
R_3$-space which is not an $\mathscr R_2$-space is obtained when $p$ is a discretely weak $P$-point not being a 
weak $P$-point and $q$ is not a discretely weak $P$-point. For details, see~\cite{GS}.

\begin{proposition}
\textup{(i)}\enspace 
A space $X$ is an $\cR_3$-space if and only if any countable discrete set $D\subset X$ is 
$C^*$-embedded in~$\overline D$. 

\textup{(ii)}\enspace 
A space $X$ is an $\cR_3$-space if and only if the closure of any countable discrete set 
$D\subset X$ in $\beta X$ is the Stone--\v Cech compactification $\beta D$ of~$D$. 

\textup{(iii)}\enspace 
A space $X$ is a $\beta\omega$-space if and only if any countable discrete set $D\subset X$ 
with compact closure is $C^*$-embedded in~$\overline D$. 
\end{proposition}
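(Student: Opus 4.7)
My plan is to reduce all three biconditionals to the classical equivalence that, for a countable discrete $D\subset X$, $D$ is $C^*$-embedded in $X$ if and only if $\overline D^{\beta X}=\beta D$ (this follows from Urysohn's extension theorem together with the universal property of~$\beta$). Under this reduction (i) and~(ii) become the same statement, and~(iii) will follow upon observing that when $\overline D$ is compact it coincides with $\overline D^{\beta X}$.

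For the easy direction of~(i), I assume that every countable discrete subset of $X$ is $C^*$-embedded and take separated countable discrete $A,B\subset X$. First I verify that $A\cup B$ is itself countable discrete: for $a\in A$, a neighborhood witnessing $a\notin\overline{B}^X$ intersected with one isolating $a$ in $A$ isolates $a$ in $A\cup B$, and symmetrically for $B$. Then $C^*$-embedding of $A\cup B$ applied to the two-valued $\chi_A\colon A\cup B\to\{0,1\}$ yields a continuous $f\colon X\to[0,1]$ with $\overline A\subset f^{-1}(0)$ and $\overline B\subset f^{-1}(1)$, which are disjoint. For the converse, I fix a countable discrete $D\subset X$; by Urysohn's extension theorem it suffices to show that any two disjoint $A,B\subset D$ are completely separated in~$X$ (in the discrete $D$ they are automatically completely separated). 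Discreteness of $D$ forces $A,B$ to be separated in~$X$, so $\cR_3$ gives $\overline A^X\cap\overline B^X=\varnothing$. To upgrade this to complete separation I would attack~(ii) directly, showing that the canonical continuous surjection $\sigma\colon\beta D\to\overline D^{\beta X}$ is injective: if $\sigma(p)=\sigma(q)=x$ with $p\neq q$, one picks $A\in p$ with $D\setminus A\in q$, obtaining $x\in\overline A^{\beta X}\cap\overline{D\setminus A}^{\beta X}$. The case $x\in X$ is ruled out immediately by $\cR_3$, and the case $x\in\beta X\setminus X$ is handled by extracting from the traces of neighborhoods of $x$ two countable discrete sequences inside $A$ and $D\setminus A$ whose $X$-closures share a point, again contradicting $\cR_3$.

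Part~(iii) is then short: compactness of $\overline D$ gives $\overline D=\overline D^{\beta X}$, so the $\beta\omega$-space condition reduces to $\overline D^{\beta X}=\beta D$, which by the classical equivalence is the same as $D$ being $C^*$-embedded in~$X$. For the forward implication I use that $D\subset\beta D=\overline D$ is trivially $C^*$-embedded, and that the compact $\overline D$ is $C^*$-embedded in the Tychonoff~$X$ via Tietze applied inside the normal $\beta X$; composing then gives $C^*$-embedding of $D$ in~$X$, and the reverse direction inverts this chain. The main obstacle I anticipate is precisely the $x\in\beta X\setminus X$ case in the injectivity argument for~(ii), i.e., translating a failure of injectivity of $\sigma$ into a concrete pair of discrete sequences in $X$ whose closures meet. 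This is where the global strength of $\cR_3$---as a hypothesis about all countable discrete subsets of~$X$, not merely about the pair $A,B$---is essential, and it is the step I expect to spend most of the proof on.
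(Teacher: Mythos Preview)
Your outline for the easy direction of~(i), for reducing~(ii) to~(i), and for all of~(iii) is sound and agrees with how the paper handles (ii) and~(iii).

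The gap is in your plan for the case $x\in\beta X\setminus X$ of the hard direction. You propose to extract subsequences $A'\subset A$ and $B'\subset D\setminus A$ whose $X$-closures meet, and derive a contradiction with~$\cR_3$. This cannot work: closure is monotone, so $\overline{A'}^{\,X}\subset\overline A^{\,X}$ and $\overline{B'}^{\,X}\subset\overline{D\setminus A}^{\,X}$, and these two sets are already disjoint by the very instance of~$\cR_3$ you applied to the pair $A,\,D\setminus A$. Thus any sequences you pick \emph{inside} $A$ and $D\setminus A$ will automatically have disjoint $X$-closures, and no contradiction arises. You yourself note that the ``global strength of~$\cR_3$'' (over all countable discrete subsets of~$X$, not just subsets of~$D$) must come into play here; but sequences confined to $A$ and $D\setminus A$ never leave~$D$, so they access nothing beyond the single instance you have already used. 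Whatever saves this case, it will not be subsequences of $A$ and $D\setminus A$.

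The paper's argument is structurally different and much shorter: it neither introduces the map~$\sigma$ nor splits on the location of~$x$. It first rewrites the $\cR_3$ condition as ``for every countable discrete $D$, any two disjoint subsets of $D$ have disjoint closures'' (using that two countable discrete sets are separated iff their union is discrete), and then observes that for a discrete $D$ this is exactly the Ta\u\i manov extension criterion (Engelking, Theorem~3.2.1) for every map $D\to[0,1]$ to extend to the closure of~$D$; together with the standard equivalence between $C^*$-embedding of $D$ in $X$, in $\beta X$, and in $\overline D^{\,\beta X}$, this yields~(i) directly. One caveat: Ta\u\i manov applied to the dense inclusion $D\subset\overline D^{\,\beta X}$ literally asks for disjoint closures in $\beta X$, while $\cR_3$ supplies disjoint closures in~$X$; the paper is terse on exactly this passage, so the point you single out as ``the main obstacle'' is real---but your proposed resolution of it is not the right one.
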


\begin{proof}
(i)\enspace 
First, note that countable discrete sets $A, B\subset X$ are separated if and only if $D=A\cup B$ is 
discrete. Therefore, $X$ is a $\cR_3$-space if and only if any disjoint subsets of any discrete set $D\subset X$ 
have disjoint closures. By Taimanov's theorem (see \cite[Theorem~3.2.1]{Engelking}) this means precisely that $D$ 
is $C^*$-embedded in the closure of $D$ in $\beta X$. 

Assertions~(ii) and (iii) immediately follow from (i) and the definitions of Stone--\v Cech compactification and 
of a $\beta\omega$-space. 
\end{proof}

\begin{corollary}
Any $\mathscr R_3$-space is a $\beta\omega$-space. Any compact $\beta\omega$-space is an $\mathscr R_3$-space. 
\end{corollary}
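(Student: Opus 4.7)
The plan is to deduce both implications directly from the proposition immediately preceding the corollary, using assertions (i) and (iii) which characterize $\cR_3$-spaces and $\beta\omega$-spaces in terms of $C^*$-embedding of countable discrete sets.

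For the first assertion, I would start by letting $X$ be an $\cR_3$-space and picking any countable discrete $D\subset X$ with compact closure $\overline D$ in $X$. By part~(i) of the proposition, $D$ is $C^*$-embedded in $X$. Restricting continuous extensions from $X$ to the closed subspace $\overline D\subset X$ shows that $D$ is also $C^*$-embedded in $\overline D$; since $\overline D$ is compact and Hausdorff, this means $\overline D$ is the Stone--\v Cech compactification $\beta D$. Hence $X$ is a $\beta\omega$-space. Alternatively, one can invoke (iii) directly: $D$ being $C^*$-embedded in $X$ certainly implies it is $C^*$-embedded in $X$ in the weaker situation when $\overline D$ happens to be compact.

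For the second assertion, let $X$ be a compact $\beta\omega$-space and let $D\subset X$ be any countable discrete set. Because $X$ is compact, the closure $\overline D$ is automatically compact, so by part~(iii) of the proposition $D$ is $C^*$-embedded in~$X$. Since every countable discrete subset of $X$ is $C^*$-embedded, part~(i) of the proposition yields that $X$ is an $\cR_3$-space.

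There is really no genuine obstacle here: the work has already been done in the proposition, and the corollary is just a matter of noting that in the compact setting the hypothesis ``with compact closure'' in the definition of a $\beta\omega$-space becomes vacuous. The only subtlety worth writing out carefully is that $C^*$-embedding in $X$ passes to $C^*$-embedding in the closed subspace $\overline D$, which is what lets the $\cR_3$-condition feed into the $\beta\omega$-condition in the first direction.
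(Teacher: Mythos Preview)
Your proposal is correct and is exactly the approach the paper intends: the corollary is stated without proof because it follows immediately from comparing parts (i) and (iii) of the preceding proposition, together with the observation that in a compact space every subset has compact closure. Your write-up spells this out faithfully; the detour through $C^*$-embedding in $\overline D$ in the first direction is unnecessary once you invoke (iii) directly, as you yourself note.
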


The class of $\beta\omega$-spaces is strictly larger than that of $\cR_3$-spaces: any space containing no infinite 
compact subspaces is a $\beta\omega$-space but not necessarily a $\cR_3$-space. From some point of view, the 
property of being an $\cR_3$-space is more natural than that of being a $\beta\omega$-space. 

A property which is in a sense opposite to $\cR_3$ was introduced by Kunen in \cite{K-OPIT}. He called a space 
$X$ \emph{sequentially small} if any infinite set in $X$ has an infinite subset whose closure does not contain a 
copy of $\beta\omega$. Thus, a compact space $X$ is sequentially small if none of its countable discrete subsets 
is $C^*$-embedded. 

\section{Homogeneity in Product Spaces}

In \cite{GS} we extended Kunen's lemma cited at the beginning of this paper as follows. 

\begin{proposition}[\cite{GS}]
\label{proposition-Kunen}
Let $X$ be any compact $\cR_2$-space. Suppose that $x\in X$,  $(d_m)_{m\in \omega}$ is a discrete sequence of 
distinct points in $X$, $(e_n)_{n\in \omega}$ is any sequence of points in $X$, and $x = \pl_m d_m = \ql_n e_n$, 
where $p$ is a weak $P$-point in $\omega^*$ and $q$ is any point in $\omega^*$. If $\{n : e_n = x\} \notin q$, 
then $p\leRK q$. 
\end{proposition}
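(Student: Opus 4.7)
The plan is to produce a function $f\colon \omega\to\omega$ with $\beta f(q)=p$, which is precisely the statement $p\leRK q$. As a first reduction, pass to the $q$-set $\{n:e_n\ne x\}\in q$, so we may assume $e_n\ne x$ for all $n$. Since $X$ is a compact $\cR_2$-space, it is a fortiori a compact $\cR_3$-space, and so $\overline{D}=\beta D$ in $X$ (with $D=\{d_m:m\in\omega\}$); the extension $\beta g\colon \beta\omega\to \overline{D}$ of $g(m)=d_m$ is a homeomorphism under which $x$ corresponds to~$p$.

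I would then split on where the $e_n$'s sit relative to $D$. If $\{n:e_n\in D\}\in q$, define $f(n)=m$ whenever $e_n=d_m$; then $\beta g(\beta f(q))=\ql_n e_n=x=\beta g(p)$, and injectivity of $\beta g$ gives $\beta f(q)=p$. Otherwise, assume $e_n\notin D$ for all $n$. If further $\{n:e_n\in \overline{D}\setminus D\}\in q$, the $e_n$'s form a countable subset of $\omega^*\setminus\{p\}$ (under the identification $\overline{D}=\beta\omega$) with $q$-limit $p$, contradicting the weak $P$-point property of~$p$. Hence the remaining case, and the substantive one, is $e_n\notin\overline{D}$ for all $n$.

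In that case, I would pick pairwise disjoint open sets $V_m\subset X$ with $V_m\cap\overline{D}=\{d_m\}$ (routine via normality of $X$ and the fact that $\{d_m\}$ is clopen in $\overline{D}=\beta\omega$), set $W=\bigcup_m V_m$, and define $f(n)=m$ iff $e_n\in V_m$. Two applications of $\cR_2$ then finish the proof. The first shows $\{n:e_n\in W\}\in q$: otherwise, $D$ and $E''=\{e_n:e_n\notin W\}$ are separated countable sets with $D$ discrete (the $V_m$'s witness $D\cap\overline{E''}=\varnothing$, while $e_n\notin\overline{D}$ yields $\overline{D}\cap E''=\varnothing$), so $\cR_2$ forces $\overline{D}\cap\overline{E''}=\varnothing$, contradicting $x\in\overline{D}\cap\overline{E''}$. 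The second shows $f^{-1}(B)\in q$ for every $B\in p$: if instead $\{n:e_n\in\bigcup_{m\notin B} V_m\}\in q$, setting $F=\{m:d_m\in\overline{E}\}$ one verifies that $D'=\{d_m:m\in B\cup(\omega\setminus F)\}$ and $E_{B^c}=\{e_n:e_n\in\bigcup_{m\notin B}V_m\}$ are separated, so that $\cR_2$ gives $\overline{D'}\cap\overline{E_{B^c}}=\varnothing$, contradicting $x\in\overline{D'}\cap\overline{E_{B^c}}$ (the first containment because $B\in p$, the second because $E_{B^c}$ is $q$-large).

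I expect the delicate point to be the second $\cR_2$-argument, specifically the identification of the right truncation $D'$ of $D$. For $m\in B$ the neighborhood $V_m$ is disjoint from $\bigcup_{m\notin B}V_m$ and so misses $E_{B^c}$, while for $m\in F\setminus B$ the neighborhood $V_m$ sits inside that union and still captures points of $E_{B^c}$ arbitrarily close to $d_m$. Thus $F\setminus B$ is exactly the index set that must be removed from $D$ to achieve separation from $E_{B^c}$, and the saving grace is that $B\cup(\omega\setminus F)\supseteq B\in p$, so $x$ remains in $\overline{D'}$ and the desired contradiction ensues.
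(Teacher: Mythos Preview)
The paper does not prove this proposition; it is quoted from~\cite{GS}. Your argument is nonetheless correct and well organized. The trichotomy on where a $q$-large set of $e_n$'s lies relative to $\overline{D}$ (inside $D$, inside $\overline{D}\setminus D$, or outside $\overline{D}$) is the natural one: the first case produces the witnessing map directly, the second is ruled out by the weak $P$-point hypothesis on $p$ via the identification $\overline{D}\cong\beta\omega$, and the third is handled by two applications of $\cR_2$ exactly as the definition invites. Two small notational gaps: the set $E$ in $F=\{m:d_m\in\overline{E}\}$ is never introduced (presumably $E=\{e_n:n\in\omega\}$, which works), and $f$ should be defined on all of $\omega$, say $f(n)=0$ when $e_n\notin W$; this is harmless since $\{n:e_n\in W\}\in q$.

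For comparison, the paper's proof of the neighboring Proposition~\ref{proposition-Kunen2} proceeds differently: there the discreteness of $q$ yields a discrete $E$, and the $\beta\omega$-property is applied \emph{symmetrically}, working inside $\overline{D}=\beta D$ in one case and inside $\overline{E}=\beta E$ in the other. Your route is necessarily asymmetric---only $D$ is known to be discrete---so you invoke $\cR_2$ (which requires discreteness of just one of the two separated sets) directly against arbitrary countable pieces of the $e_n$'s. This is precisely what the $\cR_2$ hypothesis buys over $\cR_3$, and your second $\cR_2$-step, with the truncation $D'=\{d_m:m\in B\cup(\omega\setminus F)\}$, isolates the point cleanly.
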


Imposing additional constraints on ultrafilters, we can further extend the class of spaces to which Kunen's lemma 
applies. 

\begin{proposition}
\label{proposition-Kunen2}
Let $X$ be
any compact $\beta\omega$-space. Suppose that $x\in X$,  $(d_m)_{m\in \omega}$ is a discrete sequence of distinct 
points in $X$, $(e_n)_{n\in \omega}$ is any sequence of points in $X$, and $x = \pl_m d_m = \ql_n e_n$, where 
$p,q\in \omega^*$, $p$ is a discretely weak $P$-point in $\omega^*$, and $q$ is discrete. If $\{n : e_n = x\} 
\notin q$, then $p\leRK q$. 
\end{proposition}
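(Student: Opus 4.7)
The plan is to extract from the discrete ultrafilter $q$ a discrete subsequence of $(e_n)$ along which $x$ is the limit of some $r\leRK q$, and then to show $p\equiv r$ using the compact $\beta\omega$-space structure of $X$ together with the $\leRF$-minimality of $p$ provided by Theorem~\ref{Theorem2}(iii); combining these gives $p\leRK r\leRK q$.

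For the first step, $q$ being discrete makes it $X$-discrete for every space $X$, so there is $A\in q$ with $\{e_n:n\in A\}$ discrete in $X$; after removing the set $\{n:e_n=x\}\notin q$, I may assume $e_n\ne x$ on $A$, and then enumerate $\{e_n:n\in A\}$ as a sequence $(e_{k_i})_{i\in\omega}$ of distinct points. Define $\pi$ on $A$ by $\pi(n)=i$ when $e_n=e_{k_i}$, extended arbitrarily to $\omega$; a neighborhood check shows that $r:=\beta\pi(q)$ satisfies $r\leRK q$ and $x=r$-$\lim_i e_{k_i}$. Discarding at most one index, I also assume $d_m\ne x$ for every $m$. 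The task is reduced to proving $p\equiv r$.

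For the second step I would do a case analysis. If $B:=\{m:d_m\in\{e_{k_i}\}\}\in p$, then distinctness of $(d_m)$ makes the induced map $n\colon B\to\omega$ with $d_m=e_{k_{n(m)}}$ one-to-one; since $\overline{\{e_{k_i}\}}\cong\beta\omega$ by the compact $\beta\omega$-space property, uniqueness of the limit inside this $\beta\omega$ forces $\beta n(p)=r$, and Remark~\ref{Remark_add} gives $p\equiv r$. Otherwise, restrict $(d_m)$ to a $p$-set so that $\{d_m\}\cap\{e_{k_i}\}=\varnothing$, and split on whether $M_0:=\{m:d_m\in\overline{\{e_{k_i}\}}\}\in p$. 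When $M_0\in p$, the sequence $(d_m)_{m\in M_0}$ becomes a discrete sequence of distinct points in $\overline{\{e_{k_i}\}}\setminus\{e_{k_i}\}\cong\omega^*$ with $p$-limit equal to the point identified with $r$, so Proposition~\ref{Proposition1} yields $p\leRF r$, and $\leRF$-minimality of $p$ gives $p\equiv r$. The symmetric condition that $\{i:e_{k_i}\in\overline{\{d_m\}}\}\in r$ is handled analogously, yielding $r\leRF p$ and again $p\equiv r$. In the remaining subcase I would restrict both sequences so that $D':=\{d_m\}\cup\{e_{k_i}\}$ is disjoint with neither piece meeting the closure of the other; then $D'$ is discrete in $X$, $\overline{D'}\cong\beta\omega$ by the $\beta\omega$-space property, and for any bijection $\phi\colon\omega\to D'$ the two one-to-one index maps $m\mapsto\phi^{-1}(d_m)$ and $i\mapsto\phi^{-1}(e_{k_i})$ push $p$ and $r$ forward to the same ultrafilter $u=(\beta\phi)^{-1}(x)\in\omega^*$, so Remark~\ref{Remark_add} gives $p\equiv u\equiv r$.

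The main obstacle is the final subcase: one must verify that after the $p$- and $r$-restrictions the set $\{d_m\}\cup\{e_{k_i}\}$ is genuinely discrete in $X$, and then coherently identify the two separately arising copies of $\beta\omega$ inside $X$ through a single bijection, so that both $p$ and $r$ are seen to pull back from the same $u$. In every branch one reaches $p\equiv r\leRK q$, which implies the desired $p\leRK q$.
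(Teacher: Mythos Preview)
Your overall plan is sound and tracks the paper's argument closely, but there is one genuine mistake in your case analysis. In the subcase $M_0=\{m:d_m\in\overline{\{e_{k_i}\}}\}\in p$ (with the $d_m$ already disjoint from $\{e_{k_i}\}$), you correctly obtain $p\leRF r$ from Proposition~\ref{Proposition1}, but then write ``$\leRF$-minimality of $p$ gives $p\equiv r$.'' That inference is backwards: $\leRF$-minimality of $p$ only lets you conclude $s\equiv p$ from $s\leRF p$, not from $p\leRF s$. Fortunately you do not need $p\equiv r$ here at all: $p\leRF r$ already gives $p\leRK r$, and combined with $r\leRK q$ you are done. Note that in the \emph{symmetric} subcase $\{i:e_{k_i}\in\overline{\{d_m\}}\}\in r$ you get $r\leRF p$, and \emph{there} minimality is the right tool, yielding $r\equiv p$.

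Your ``remaining subcase,'' which you flag as the main obstacle, is in fact empty. After restricting so that neither piece meets the closure of the other, the sets $D_1=\{d_m\}$ and $E_1=\{e_{k_i}\}$ are separated countable discrete subsets of the compact $\beta\omega$-space $X$; since such a space is an $\cR_3$-space, $\overline{D_1}\cap\overline{E_1}=\varnothing$. But $x=\pl_m d_m\in\overline{D_1}$ and $x=r\text{-}\lim_i e_{k_i}\in\overline{E_1}$, a contradiction. (Equivalently, in your own notation: the images $\phi^{-1}(D_1)$ and $\phi^{-1}(E_1)$ are complementary subsets of $\omega$, so their closures in $\beta\omega$ are disjoint; your $u=(\beta\phi)^{-1}(x)$ would have to lie in both, which is impossible.) So your argument there, read carefully, produces a contradiction rather than $p\equiv u\equiv r$. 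The paper exploits this $\cR_3$ observation at the outset, using it to force either $E\subset\overline D$ on a $q$-set or $D\subset\overline E$ on a $p$-set, and then runs essentially your cases A, B1, B2; your extra subcase simply never occurs.
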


\begin{proof}
Since $q$ is discrete and $\{n : e_n = x\} 
\notin q$, it follows that there exists a $Q\in q$ for which the set $E=\{e_n:n\in 
Q\}$ is discrete (and $x\in \overline E\setminus E$). By assumption $D=\{d_m:m\in \omega\}$ is discrete as 
well, and $x\in \overline D\setminus D$.  Since $X$ is a compact $\beta\omega$-space, the point $x$ has a 
neighborhood $U$ such that either $U\cap (D\setminus \overline E)=\varnothing$ or $U\cap (E\setminus \overline 
D)=\varnothing$ (otherwise $x$ would belong to the intersection of the closures of the separated countable 
discrete sets $E\setminus \overline D$ and $D\setminus \overline E$). By the definition of $q$-limit, the set 
$\{n\in \omega: e_n\in U\}$ belongs to $q$. We assume without loss of generality that $Q$ is contained in this 
set. 

Since $x\in \overline D\cap \overline E$ and $x\notin D\setminus \overline E\cup E\setminus \overline D$, 
we have either $x\in \overline {\overline D\cap E}$ or $x\in \overline{D\cap \overline E}$. 

Suppose that $x\in \overline{\overline D\cap E}\subset \overline D$. Clearly, we then have $U\cap (E\setminus 
\overline D)=\varnothing$. Recall that $\overline D=\beta D$ (because $X$ is a compact $\beta\omega$-space) and 
consider the map $f\colon d_m\mapsto m$. We have $\beta f(x)=\pl_m \beta f(d_m)=\pl_m m=p$ (see 
Remark~\ref{Remark1}). On the other hand, setting $e'_n=e_n$ for $n\in Q$ and $e'_n=x$ 
for $n\in \omega\setminus Q$, we obtain a sequence $(e'_n)_{n\in \omega}$ for which $x=\ql_n e'_n$, because 
the sequence $(e'_n)$ coincides with $(e_n)$ on an element of $q$. Therefore, 
$p=\beta f(x)=\ql_n \beta f(e'_n)$ by Remark~\ref{Remark1}\,(ii), 
and  Proposition~\ref{Proposition4}\,(iii) implies $p\leRK q$. 

Now suppose that $x\in \overline{D\cap \overline E}\subset \overline E$; in this case, $U\cap (D\setminus 
\overline E)=\varnothing$, so that $\{m\in \omega: d_m\in \overline E\}\in p$. Let us somehow number the points 
of $E$ as $\{e'_n:n\in\omega\}$, so that $(e'_n)_{n\in \omega}$ is a discrete sequence of distinct points with 
range $E$, and define $\varphi\colon \omega\to \omega$ by setting $\varphi(n)$ equal to, say, $0$ for $n\in 
\omega\setminus Q$ and to the number $k$ such that $e_n=e'_k$ for $n\in Q$. It is easy to check that 
$x=\beta\varphi(q)$-$\lim_n e'_n$ (see Remark~\ref{Remark1}\,(iii)). Consider the one-to-one map $g\colon E\to 
\omega$ defined by $g(e'_n) = n$. We have $\overline E=\beta E$ and $\beta g(x)=\beta\varphi(q)$-$\lim_n\beta 
g(e'_n)=\beta \varphi(q)$.

Suppose that $\{m: d_m\notin E\}\in p$. Then 
there are $P,P'\in p$, $P\subset P'$, for which   
$\{d_m:m\in P'\}\subset E^*=\beta E\setminus E$ and $P'\setminus P$ is infinite. Choose a bijection 
$\Psi\colon P'\setminus P\to \omega \setminus P$. Setting $d'_m=d_m$ for $m\in P$ and $d'_m=d_{\Psi^{-1}(m)}$ for 
$m\in \omega\setminus P$, we obtain a new discrete sequence $(d'_m)_{m\in \omega}$ of distinct points with range 
$D'\subset D\cap E^*$ which coincides with $(d_m)_{m\in \omega}$ on $P\in p$. Clearly, we still have $x= \pl_m 
d'_m$ and $\beta g(x)=\pl_m \beta g(d'_m)$; moreover, $\beta g(d'_m)\in \omega^*$ for all $m$. But this is 
impossible by Corollary~\ref{Corollary1}.

Thus, there exists a $P'\in p$ for which $\{d_m:m\in P'\}\subset E$. For 
the sequence $(d'_m)_{m\in \omega}$ constructed in precisely the same way as above (by taking $P\in p$, $P\subset  
P'$, such that  $P'\setminus P$ is infinite and redefining $(d_m)$ on $\omega \setminus P$), we have 
$D'=\{d'_m:m\in \omega\}\subset E$ and hence $(\beta g)\restriction D'=g\restriction D'$. Note also that 
$d'_m=d_m$ for  $m\in P$.

Since the element $P$ of $p$ has infinite complement in $\omega$, there is a bijection $\psi\colon 
\omega\to \omega$ such that $\psi(g(d'_m))=m$ for $m\in P$. Then the sequence $(m)_{m\in 
\omega}$, which coincides with $(\psi(g(d'_m)))_{m\in \omega}$ and hence with $(\psi(g(d_m)))_{m\in \omega}$ 
when restricted to $P$, converges 
to $\beta\psi(\beta g(x))$ along $p$. Therefore, $\beta\psi(\beta g(x))=p$. Since $\psi$ is one-to-one, 
it follows that $p$ is equivalent to $\beta g(x)$, and since $\beta g(x)= \beta\varphi(q)$, it follows that 
$p\leRK q$. 
\end{proof}

\begin{remark}
\label{remark-added2}
For any ultrafilter $q\in \omega^*$, there exists a weak $P$-point $p\in \omega^*$ such that  
$p\not \leRK q$. 

Indeed, by Remark~\ref{remark-added1} \,$q$ has at most $2^\omega$ \,$\leRK$-predecessors, 
while the number of weak $P$-points in $\omega^*$ is $2^{\omega^\omega}$~\cite{Kunen1978}.
\end{remark}

\begin{corollary}
\label{corollary-added}
If there exists a discrete ultrafilter in $\omega^*$, then there exist no homogeneous compact 
$\beta\omega$-spaces. 
\end{corollary}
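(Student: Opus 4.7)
The plan is to derive a contradiction directly from the extended Kunen lemma (Proposition~\ref{proposition-Kunen2}), combined with the ZFC existence of sufficiently many weak $P$-points supplied by Remark~\ref{remark-added2}. Suppose, aiming at contradiction, that $q \in \omega^*$ is a discrete ultrafilter and $X$ is an infinite homogeneous compact $\beta\omega$-space (the finite case being vacuous). First I would invoke Remark~\ref{remark-added2} to pick a weak $P$-point $p \in \omega^*$ with $p \not\leRK q$. Since every weak $P$-point is a discretely weak $P$-point, this $p$ meets the hypothesis on $p$ in Proposition~\ref{proposition-Kunen2}.

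Next, since $X$ is infinite compact Hausdorff, I would choose a countable discrete subset and enumerate it as a discrete sequence $(d_m)_{m \in \omega}$ of distinct points. Compactness of $X$ guarantees the existence of $x := \pl_m d_m$ and $y := \ql_m d_m$. Each $d_{m_0}$ has an open neighborhood meeting the range $\{d_m : m \in \omega\}$ only in $\{d_{m_0}\}$; since the singleton $\{m_0\}$ cannot lie in a nonprincipal ultrafilter, this forces $x, y \notin \{d_m : m \in \omega\}$.

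By homogeneity, fix a homeomorphism $h\colon X \to X$ with $h(y) = x$ and set $e_m := h(d_m)$. Remark~\ref{Remark1}(ii) yields $\ql_m e_m = h(y) = x = \pl_m d_m$, so $x$ is a common $p$- and $q$-limit of the two sequences. The only remaining hypothesis of Proposition~\ref{proposition-Kunen2} to verify is that $\{m : e_m = x\} \notin q$; but $h(d_m) = x$ iff $d_m = y$, and $y \notin \{d_m\}$ was just established, so this set is empty and therefore not in~$q$. Proposition~\ref{proposition-Kunen2} then delivers $p \leRK q$, contradicting the choice of $p$.

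The main conceptual work is already absorbed into Proposition~\ref{proposition-Kunen2}; once the extended Kunen lemma and Remark~\ref{remark-added2} are in hand, the proof reduces to the classical Frol\'\i k-type manoeuvre of transporting a $q$-limit to a $p$-limit via a self-homeomorphism. I do not anticipate any genuine obstacle beyond the bookkeeping above, although one should be mindful to invoke the extended lemma with the discretely-weak-$P$-point hypothesis on $p$ (rather than the stronger weak-$P$-point one that Remark~\ref{remark-added2} happens to deliver).
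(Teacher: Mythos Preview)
Your proof is correct and follows essentially the same strategy as the paper: pick a weak $P$-point $p$ with $p\not\leRK q$ via Remark~\ref{remark-added2}, use homogeneity to transport the $q$-limit onto the $p$-limit, and invoke Proposition~\ref{proposition-Kunen2} for a contradiction. The only noteworthy difference is in how the discrete sequence is obtained: you start directly with a countable discrete set in $X$ and use it as the $p$-sequence $(d_m)$, whereas the paper begins with an arbitrary sequence of distinct points and appeals to the discreteness of $q$ to make the image sequence $(h(e_n))$ discrete. Your route is slightly cleaner, since Proposition~\ref{proposition-Kunen2} requires the \emph{$p$}-sequence to be discrete, and your choice delivers this immediately; it also makes the verification of $\{m:e_m=x\}\notin q$ transparent.
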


\begin{proof}
Let $q$ be a discrete ultrafilter in $\omega^*$, and let $p\in \omega^*$ be a weak $P$-point such that   
$p\not \leRK q$. Suppose that $X$ is a homogeneous compact $\beta\omega$-space and $(e_n)_{n\in \omega}$ is 
any sequence of distinct points in $X$. Let $x = \pl_n e_n$, and let $y= \ql_n e_n$. Since $X$ is 
homogeneous, there exists a homeomorphism $ h\colon X\to X$ taking $y$ to $x$, and since 
$q$ is discrete, there is an $A\in q$ for which $\{h(e_n):n\in A\}$ is discrete. We can assume without loss of 
generality that $(h(e_n))_{n\in \omega}$ is a discrete sequence of 
distinct points. By Proposition~\ref{proposition-Kunen2} we have $p\leRK q$, which contradicts the assumption. 
\end{proof}

Kunen used his lemma to prove a theorem on the nonhomogeneity of product spaces~\cite[Theorem~1]{K-OPIT}. 
Using Propositions~\ref{proposition-Kunen} and~\ref{proposition-Kunen2} and Remark~\ref{remark-added2} 
instead of the lemma in Kunen's 
argument, we obtain the following results. 

\begin{theorem}
Let  $X=\prod_{\alpha<\kappa}X_\alpha$, where $\kappa$ is any cardinal and each $X_\alpha$ satisfies at 
least one of the following conditions: \textup{(i)}~is an infinite compact $\cR_2$-space; \textup{(ii)}~contains a 
weak $P$-point; \textup{(iii)}~has a nonempty sequentially small open subset. Suppose also that at least one 
$X_\alpha$ is an infinite compact $\cR_2$-space. Then $X$ is not homogeneous. 
\end{theorem}

\begin{corollary}
No product of compact $\cR_2$-spaces is homogeneous.
\end{corollary}

\begin{theorem}
Suppose that there exists a discrete ultrafilter in $\omega^*$. 
Let  $X=\prod_{\alpha<\kappa}X_\alpha$, where $\kappa$ is any cardinal and each $X_\alpha$ satisfies at 
least one of the following conditions: \textup{(i)}~is an infinite compact $\beta\omega$-space; 
\textup{(ii)}~contains a 
weak $P$-point; \textup{(iii)}~has a nonempty sequentially small open subset. Suppose also that at least one 
$X_\alpha$ is an infinite compact $\beta\omega$-space. Then $X$ is not homogeneous. 
\end{theorem}

\begin{corollary}
If there exists a discrete ultrafilter in $\omega^*$, then no product of compact $\beta\omega$-spaces 
is homogeneous. 
\end{corollary}

The following corollary uses the assumption $\mathfrak d = \mathfrak c$. Recall that the notation 
$\mathfrak d$ is used for the \emph{dominating number}, that is, the smallest cardinality of 
a family $\mathscr D$ of functions $\omega\to \omega$ with the property that, for every 
function $f\colon \omega \to \omega$, there is a $g\in \mathscr D$ such that $g(n)\ge f(n)$ for all but finitely 
many $n\in \omega$,  and $\mathfrak c$ is the standard notation for $2^\omega$. Obviously, $CH$ implies 
$\omega_1=\mathfrak d =\mathfrak c$, although $\omega_1< \mathfrak d = \mathfrak c$ is consistent with ZFC 
as well.

\begin{corollary}
Under the assumption $\mathfrak d = \mathfrak c$, no product of compact $\beta\omega$-spaces 
is homogeneous. 
\end{corollary}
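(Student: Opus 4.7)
The plan is to derive this directly from part~(ii) of the preceding corollary, which asserts that the existence of a discrete ultrafilter in $\omega^*$ already implies that no product of compact $\beta\omega$-spaces is homogeneous. Thus it suffices to establish that the set-theoretic hypothesis $\mathfrak{d}=\mathfrak{c}$ forces the existence of at least one discrete ultrafilter in~$\omega^*$.

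For this, I would combine two classical facts. First, by a theorem of Ketonen, the hypothesis $\mathfrak{d}=\mathfrak{c}$ implies that there is a $P$-point in $\omega^*$. Second, by Baumgartner's observation recorded in Remark~\ref{P-discrete}, every $P$-point is a discrete ultrafilter. Chaining these, a $P$-point supplied by Ketonen's theorem satisfies the hypothesis of part~(ii) of the preceding corollary, and the desired conclusion follows at once.

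The substantive difficulty is concentrated entirely inside Ketonen's theorem --- a transfinite construction of length $\mathfrak{c}$ in which one diagonalizes against partitions of $\omega$ while exploiting a dominating family to produce pseudo-intersections at limit stages of countable cofinality --- but this is a well-known classical result and would be cited rather than reproved. No further combinatorics is required here, so the proof reduces to a brief chain of implications: $\mathfrak{d}=\mathfrak{c}$ yields a $P$-point (Ketonen), every $P$-point is a discrete ultrafilter (Baumgartner), and any discrete ultrafilter rules out homogeneity of products of compact $\beta\omega$-spaces (preceding corollary). The only genuine caveat to watch for when writing this out is to state the two invoked results precisely in the form needed and to cite Ketonen's paper alongside the reference to Remark~\ref{P-discrete}.
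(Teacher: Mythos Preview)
Your proposal is correct and matches the paper's proof essentially verbatim: the paper likewise cites Ketonen to obtain a $P$-point from $\mathfrak d=\mathfrak c$, invokes Remark~\ref{P-discrete} to conclude that this $P$-point is a discrete ultrafilter, and then appeals (implicitly) to the preceding corollary. There is nothing to add or adjust.
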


\begin{proof}
Ketonen proved that $\mathfrak d = \mathfrak c$ implies the existence of 
$P$-points in $\omega^*$ \cite{Ketonen}. By Remark~\ref{P-discrete} any $P$-point 
is a discrete ultrafilter.  
\end{proof}

In conclusion, we mention recent results of Reznichenko concerning homogeneous compact subspaces of product 
spaces. He proved that, under CH, (i)~any compact set in a homogeneous subspace of a countable product of 
$\beta\omega$-spaces is metrizable, (ii)~any compact set in a homogeneous subspace of a finite product of 
$\beta\omega$-spaces is finite~\cite[Theorems~3 and~4]{Reznichenko}. An analysis of his proof shows that 
CH can be replaced by the assumption that there exist uncountably many 
$\leRB$-incompatible (that is, not nearly coherent) $P$-points. To be more precise, 
the following theorems hold. 

\begin{theorem}
Suppose that there exist uncountably many $\leRB$-incompatible $P$-points and $X=\prod_{n\in \omega}X_n$, where 
each $X_n$ is a compact $\beta\omega$-space. Let $Y\subset X$ be a homogeneous space. Then each compact subspace 
of $Y$ is metrizable. 
\end{theorem}

\begin{theorem}
Let $n$ be a positive integer. Suppose that there exist $n+1$ \,$\leRB$-incompatible $P$-points and 
$X=\prod_{i=1}^n X_i$, where 
each $X_i$ is a compact $\beta\omega$-space. Let $Y\subset X$ be a homogeneous space. 
Then each compact subspace 
of $Y$ is finite. 
\end{theorem}

This gives rise to the question of 
investigating conditions for the existence of uncountably many not nearly coherent $P$-points. 
A plausible conjecture is that such a condition is 
$\mathfrak d = \mathfrak u = \mathfrak c$ (here $\mathfrak u$ is the minimum cardinality of 
a free ultrafilter base on $\omega$), because under this condition there exist, first, $2^{2^\omega}$ 
Rudin--Keisler incomparable $P$-points~\cite{DT} and, secondly, $2^{2^\omega}$ 
near-coherence classes of ultrafilters~\cite{BB}.

\section*{Acknowledgments}

The authors are very grateful to Evgenii Reznichenko and Taras Banakh for helpful discussions.

\end{document}